\newtheorem{theorem}{Theorem}[section]
\newtheorem{lemma}[theorem]{Lemma}
\newtheorem{corollary}[theorem]{Corollary}
\newtheorem{proposition}[theorem]{Proposition}
\theoremstyle{definition}
\newtheorem{definition}[theorem]{Definition}
\newtheorem{example}[theorem]{Example}
\newtheorem{note}[theorem]{Note}
\theoremstyle{remark}
\begin{document}

\title[Involutions and their progenies]
{Involutions and their progenies}

\author{Tewodros Amdeberhan}
\address{Department of Mathematics,
Tulane University, New Orleans, LA 70118}
\email{tamdeber@tulane.edu}

\author{Victor H. Moll}
\address{Department of Mathematics,
Tulane University, New Orleans, LA 70118}
\email{vhm@tulane.edu}

\subjclass{Primary 05A15, 11B75}

\date{\today}

\keywords{involutions, valuations, asymptotics}

\begin{abstract}
Any permutation has a disjoint cycle decomposition and concept generates an equivalence class
on the symmetry group called the cycle-type. The main focus of this work is on permutations of restricted cycle-types,
with particular emphasis on the special class of involutions and their partial sums. 
The paper provides generating functions,  determinantal expressions, asymptotic 
estimates as well as arithmetic and combinatorial properties.
\end{abstract}

\maketitle

\newcommand{\ba}{\begin{eqnarray}}
\newcommand{\ea}{\end{eqnarray}}
\newcommand{\ift}{\int_{0}^{\infty}}
\newcommand{\nn}{\nonumber}
\newcommand{\no}{\noindent}
\newcommand{\lf}{\left\lfloor}
\newcommand{\rf}{\right\rfloor}
\newcommand{\realpart}{\mathop{\rm Re}\nolimits}
\newcommand{\imagpart}{\mathop{\rm Im}\nolimits}
\newcommand{\ione}{I_{1}}
\newcommand{\itwo}{I_{2}}
\newcommand{\ch}{\qquad \mathbf{}}

\newcommand{\op}[1]{\ensuremath{\operatorname{#1}}}
\newcommand{\pFq}[5]{\ensuremath{{}_{#1}F_{#2} \left( \genfrac{}{}{0pt}{}{#3}
{#4} \bigg| {#5} \right)}}

\newtheorem{Definition}{\bf Definition}[section]
\newtheorem{Thm}[Definition]{\bf Theorem}
\newtheorem{Example}[Definition]{\bf Example}
\newtheorem{Lem}[Definition]{\bf Lemma}
\newtheorem{Cor}[Definition]{\bf Corollary}
\newtheorem{Prop}[Definition]{\bf Proposition}
\numberwithin{equation}{section}

\section{Introduction}
\label{sec-intro}
\setcounter{equation}{0}

For $n \in \mathbb{N}$, the group of permutations in $n$ symbols 
$\{ a_{1}, \, a_{2}, \cdots, a_{n} \}$ is called 
the symmetric group, denoted by   $\mathfrak{S}_{n}$.  A \textit{cycle} $\rho \in \mathfrak{S}_{n}$ is a permutation of the form, in a 
one-line notation, 
$\rho = ( a_{i_{1}} \, a_{i_{2}} \, \cdots a_{i_{r} })$. The notation indicates that all the entries of 
the cycle are distinct and 
$ \rho(a_{i_{j}}) = a_{i_{j+1}}$  for $1 \leq j \leq r-1$ and $\rho( a_{i_{r}}) = a_{i_{1}}$.  The 
cycle $\rho$ is said to have \textit{length} $r$, written as $r = L(\rho)$.  Every permutation $\pi \in \mathfrak{S}_{n}$ can be written as 
a product of cycles $\pi = \rho_{1}\rho_{2} \cdots  \rho_{m}$. This decomposition is not unique, but if the cycles are assumed 
to be disjoint and the lengths are taken in weakly decreasing order, then 
 $\{L(\rho_{1}), L(\rho_{2}), \cdots, L(\rho_{m})\}$ is uniquely determined by $\pi$, called the \textit{cycle type} of $\pi$.

The following notation is used: for $1 \leq \ell, \, t \leq n$, 
\begin{equation}
C_{n, \ell} = \{ \pi \in \mathfrak{S}_{n} \Big{|} \text{ with every cycle in } \pi \text{ of length at most } \ell \},
\label{cnl-def}
\end{equation}
\no
\begin{equation}
\alpha_{t}(\pi) = \text{ number of  cycles  in } \pi \in \mathfrak{S}_{n} \text{ of length }t.
\end{equation}
\no
and the cardinality of $C_{n, \ell}$ is denoted by $d_{n,\ell} = \# C_{n,\ell}$.

\begin{definition}
A permutation $\pi$ in $\mathfrak{S}_{n}$ is called an \textit{involution}
if $\pi^{2}(j) =  j$, for $1 \leq j \leq n$. The set of involutions in $\mathfrak{S}_{n}$ is 
denoted by $\text{Inv}(n)$. The cardinality of this set, denoted by $I_{1}(n)$, is 
called the \textit{involution number}.
\end{definition}

The factorization of $\pi$ as a product of disjoint cycles shows that any cycle in the factorization 
of an involution has length $1$ or $2$.  This implies $\text{Inv}(n) = C_{n,2}$ and thus 
$I_{1}(n) = d_{n,2}$. It follows that if $\pi \in \text{Inv}(n)$ is an 
involution, then $\alpha_{1}(\pi) + 2 \alpha_{2}(\pi) = n$. 

\begin{example}
Every permutation of $2$ symbols (a transposition)
is an involution and for $n=3$ there are $4$ involutions
\begin{equation}
\pi_{1} = (1)(2)(3), \, \pi_{2} = (12), \, \pi_{3} = (13), \pi_{4} = (23).
\end{equation}
\noindent
The cycles $(123)$ and $(132)$ are the only elements of $S_{3}$ that are not 
involutions. Therefore $I_{1}(2) = 2$ and $I_{1}(3) = 4$. 
\end{example}

Elementary properties of the numbers $I_{1}(n)$
are described in Section \ref{sec-ione}. These include a second order recurrence, an 
exponential generating function as well as an explicit finite sum. These 
 are generalized to the \textit{involution 
polynomials} $I_{1}(n;t)$ in Section \ref{sec-involution-poly} which are intimately linked
to the (probabilistic) Hermite polynomials defined by 
\begin{equation}
H_{n}(t) = n! \sum_{j=0}^{\lfloor n/2 \rfloor} \frac{(-1)^{j}}{j!(n-2j)!}
\frac{t^{n-2j}}{2^{j}}
\label{hermite-1a}
\end{equation}
\no
with generating function 
\begin{equation}
\sum_{n=0}^{\infty} H_{n}(t) \frac{x^{n}}{n!} = \exp\left(xt - \tfrac{1}{2}x^{2} \right).
\label{gen-hermite-0}
\end{equation}
\no
The involution polynomials have a combinatorial interpretation as the generating function for 
fixed points of permutation in $\mathfrak{S}_{n}$.   Arithmetic properties of $I_{1}(n)$ are presented 
in Section \ref{sec-arith-ione}. Particular emphasis is given to the $2$-adic valuation of $I_{1}(n)$. 
Recall that, for $x \in \mathbb{N}$ and $p$ prime, the \textit{$p$-adic valuation} of $x$, denoted by
$\nu_{p}(x)$, is the highest power of $p$ that divides $x$. 
An odd prime $p$ is called \textit{efficient}  if $p$ does not divide $I_{1}(j)$ for $0 \leq j \leq p-1$. 
Otherwise it is called \textit{inefficient}. The prime $p=3$ is efficient and  $p=5$ is inefficient 
since $I_{1}(4) = 10$.  A periodicity argument is used to show that $\nu_{p}(I_{1}(n) = 0$; i.e., 
$p$ is efficient. Morever, for a prime $p$, it is shown that either $p$ divides $I_{1}(n)$ infinitely 
often or never.  In the case of an inefficient prime, it is conjecture that the $p$-adic valuation of 
the sequence $I_{1}(n)$ can be given in terms of a tree $\mathbb{T}_{p}$. This phenomena is illustrated 
for the prime $p=5$.  It is an open question to characterize efficient (or 
inefficient) primes. The partial sums of $I_{1}(n)$, denoted by $a_{n}$,
are discussed in Section \ref{sec-sum-i1}. Their arithmetic properties 
are presented in Section \ref{sec-arith-a}.  For instance, an explicit 
expression for their $2$-adic valuation is given there. The valuations for odd 
primes are also conjectured to have a tree structure. This is illustrated in the case $p=5$.  
Section \ref{sec-permu} considers the statistics 
of the sequence $d_{n, \ell}$  in \eqref{cnl-def}.  This is a generalization
of $I_{1}(n) = C_{n,2}$. Finally, the asymptotic behavior of $d_{n, \ell} = |C_{n,\ell}|$ is given in Section \ref{sec-asymptotics}.

\section{Basic  properties of the involution numbers}
\label{sec-ione}
\setcounter{equation}{0}

This section discusses fundamental properties of $I_{1}(n)$. Some of them are well-known but proofs are 
included here for the convenience of the reader. 

\begin{theorem}
\label{thm-first-recu}
The sequence $\ione(n)$ satisfies the recurrence 
\begin{equation}
\ione(n) = \ione(n-1)+(n-1)\ione(n-2), \text{ for } n \geq 2,
\label{rec-invol}
\end{equation}
\no
with initial conditions $\ione(0) = \ione(1) = 1$. 
\end{theorem}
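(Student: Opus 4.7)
The plan is to give a standard bijective/combinatorial argument by conditioning on the cycle containing the element $n$. As established in the text, every involution decomposes into disjoint cycles of length $1$ or $2$, so the element $n$ is either a fixed point of $\pi$ or lies in a unique $2$-cycle with some element $k \in \{1, 2, \ldots, n-1\}$. These two cases partition $\mathrm{Inv}(n)$, so the plan is to count each case separately and add.

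In the first case, $\pi(n) = n$, and removing the fixed point $n$ produces an involution on $\{1, \ldots, n-1\}$. Conversely, any involution on $\{1, \ldots, n-1\}$ extends uniquely to an involution on $\{1, \ldots, n\}$ by declaring $n$ to be a fixed point. This bijection contributes exactly $I_{1}(n-1)$ involutions. In the second case, $\pi(n) = k$ with $k < n$, so $(k,n)$ is a $2$-cycle of $\pi$. There are $n-1$ choices for the partner $k$, and after deleting both $n$ and $k$ the restriction of $\pi$ is an involution on the remaining $n-2$ element set. Since the set $\{1, \ldots, n-1\} \setminus \{k\}$ is in canonical bijection with $\{1, \ldots, n-2\}$, each choice of $k$ contributes $I_{1}(n-2)$ involutions, giving $(n-1)I_{1}(n-2)$ in total. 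Summing the two cases yields \eqref{rec-invol}.

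The initial conditions are immediate: $I_{1}(0)=1$ counts the empty permutation, and $I_{1}(1)=1$ counts the identity on a single symbol. I do not foresee any real obstacle here; the only subtlety worth flagging is to verify that the two cases are disjoint and exhaustive and that the restriction maps described are actual bijections (injectivity and surjectivity both follow from the fact that the reconstruction from the data "fixed point at $n$" or "$2$-cycle $(k,n)$ plus an involution on the complement" is unique). An alternative route would be to differentiate the exponential generating function $\exp(x + x^{2}/2)$ (which will be derived later in Section~\ref{sec-involution-poly} as a consequence of \eqref{gen-hermite-0}), but the combinatorial proof above is shorter and self-contained, so that is the approach I would take.
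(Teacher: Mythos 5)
Your proof is correct and follows the same approach as the paper: conditioning on whether $n$ is a fixed point or lies in a $2$-cycle with one of the $n-1$ remaining elements. Your write-up is simply a more detailed version of the paper's argument.
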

\begin{proof}
There are $\ione(n-1)$ involutions that fix $n$. The number of involutions 
that contain a cycle $(j \, n)$, with $1 \leq j \leq n-1$ is $n-1$ times 
the number of involutions containing the cycle $(n-1, \, n)$. This is 
$(n-1) \ione(n-2)$.
\end{proof}

The  recurrence above generates  the values 

\smallskip

\begin{center}
\begin{tabular}{||c|c|c|c|c|c|c|c|c|c|c|c||}
\hline 
$n$ & 0 & 1 & 2 & 3 & 4 & 5 & 6 & 7 & 8 & 9 & 10 \\
\hline 
$I_{1}(n)$ &1 & 1 & 2 & 4 & 10 & 26 & 76 & 232 & 764 & 2620 & 9496\\
\hline
\end{tabular}
\end{center}
\no 
This is sequence $A000085$ in OEIS.

\smallskip

The recurrence \eqref{rec-invol} now enables to write a generating function for  $\{ \ione(n) \}$.

\begin{theorem}
\label{one-bf}
The exponential generating function for $\ione(n)$ is 
\begin{equation}
\sum_{n=0}^{\infty} \frac{\ione(n)}{n!}x^{n} = \exp(x+  \tfrac{1}{2}x^{2}).
\label{genfun-ione}
\end{equation}
\end{theorem}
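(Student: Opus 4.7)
The plan is to convert the recurrence \eqref{rec-invol} into a first-order linear ordinary differential equation for the generating function and then solve it explicitly. Writing $F(x) = \sum_{n \geq 0} \frac{I_1(n)}{n!}x^n$, I would multiply the recurrence $I_1(n) = I_1(n-1) + (n-1)I_1(n-2)$ by $\frac{x^{n-1}}{(n-1)!}$ and sum over $n \geq 2$. The left-hand side becomes $F'(x) - I_1(1) = F'(x) - 1$. On the right, the first sum rewrites as $F(x) - I_1(0) = F(x) - 1$, and the second sum telescopes into $xF(x)$ after cancelling the factor of $(n-1)$ and reindexing. The recurrence thus collapses to the ODE
\begin{equation}
F'(x) = (1+x)\,F(x), \qquad F(0) = 1,
\end{equation}
whose unique solution is $F(x) = \exp(x + \tfrac{1}{2}x^2)$.

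An alternative, more combinatorial route would be the exponential formula: an involution is precisely a disjoint union of cycles of length $1$ or $2$, whose EGFs are $x$ and $\tfrac{1}{2}x^2$ respectively. Exponentiating the sum gives the same closed form and serves as an independent verification; if desired this can be mentioned as a parenthetical remark.

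I do not anticipate a genuine obstacle here: the recurrence is linear with polynomial coefficients, so the passage to an ODE is mechanical, and the initial condition is fixed by $I_1(0)=1$. The only points where care is needed are the bookkeeping of the starting index $n=2$ (to correctly identify the constant terms $I_1(0)$ and $I_1(1)$ that are subtracted off when extending sums down to $n=0$) and the reindexing $\frac{(n-1)}{(n-1)!} = \frac{1}{(n-2)!}$ that produces the factor $x$ multiplying $F(x)$.
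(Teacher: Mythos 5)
Your proposal is correct and follows essentially the same route as the paper, which likewise derives the first-order ODE $f'(x) = (1+x)f(x)$ with $f(0)=1$ from the recurrence \eqref{rec-invol}; your version simply spells out the summation and reindexing details that the paper leaves to the reader. The parenthetical exponential-formula argument is a valid independent check but not needed.
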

\begin{proof}
On the basis of \eqref{rec-invol} verify that both sides of \eqref{genfun-ione} satisfy
\newline $f'(x) = (1+x)f(x)$ and the value $f(0)=1$.
\end{proof}

Cauchy's product formula on $e^{x}$ and $e^{x^{2}/2}$ allows to express $I_{1}(n)$ 
as a finite sum.

\begin{corollary}
The involution numbers $I_{1}(n)$ are given by 
\begin{equation}
I_{1}(n) = \sum_{j=0}^{\lf n/2 \rf} \binom{n}{2j} \binom{2j}{j} \frac{j!}{ 2^{j}}.
\label{finite-sum1}
\end{equation}
\end{corollary}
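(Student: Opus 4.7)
The plan is to extract the coefficient of $x^n$ on both sides of the exponential generating function identity supplied by Theorem \ref{one-bf}, namely
\[
\sum_{n=0}^{\infty} \frac{I_{1}(n)}{n!} x^{n} = e^{x} \cdot e^{x^{2}/2}.
\]
First I would write down the two individual series
\[
e^{x} = \sum_{k=0}^{\infty} \frac{x^{k}}{k!}, \qquad e^{x^{2}/2} = \sum_{j=0}^{\infty} \frac{x^{2j}}{2^{j} j!},
\]
noting that the second series has only even powers, which forces the outer summation index in the Cauchy product to run over $2j \leq n$.

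Next I would invoke the Cauchy product to write
\[
e^{x} \cdot e^{x^{2}/2} = \sum_{n=0}^{\infty} x^{n} \sum_{j=0}^{\lfloor n/2 \rfloor} \frac{1}{(n-2j)!} \cdot \frac{1}{2^{j} j!},
\]
and then compare coefficients of $x^{n}$ with the left-hand side. This gives
\[
I_{1}(n) = \sum_{j=0}^{\lfloor n/2 \rfloor} \frac{n!}{(n-2j)! \, j! \, 2^{j}}.
\]

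Finally, the last step is purely cosmetic: rewrite the summand as a product of binomial coefficients. Multiplying and dividing by $(2j)!$, one has
\[
\frac{n!}{(n-2j)! \, j! \, 2^{j}} = \frac{n!}{(2j)!(n-2j)!} \cdot \frac{(2j)!}{j! \, j!} \cdot \frac{j!}{2^{j}} = \binom{n}{2j} \binom{2j}{j} \frac{j!}{2^{j}},
\]
which gives the stated formula \eqref{finite-sum1}. There is no real obstacle here; the only mild care needed is to keep track of the parity constraint $2j \leq n$ when matching coefficients, and to perform the elementary manipulation that turns the multinomial-style quotient into the factored form involving $\binom{n}{2j}\binom{2j}{j}$.
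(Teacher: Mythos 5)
Your proof is correct and follows exactly the route the paper intends: the corollary is obtained by taking the Cauchy product of the series for $e^{x}$ and $e^{x^{2}/2}$ from Theorem \ref{one-bf}, matching coefficients of $x^{n}$, and rewriting $\tfrac{n!}{(n-2j)!\,j!\,2^{j}}$ as $\binom{n}{2j}\binom{2j}{j}\tfrac{j!}{2^{j}}$. The details you supply (the parity constraint $2j\leq n$ and the factorial manipulation) are exactly the ones the paper leaves implicit.
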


The numbers $\begin{displaystyle} \binom{2j}{j} \frac{j!}{ 2^{j}} \end{displaystyle}$
appearing in \eqref{finite-sum1} are now shown to be of the same parity.

\begin{corollary}
For $j \in \mathbb{N}$, the numbers $(2j)!/(j!2^{j})$ are odd integers.
\end{corollary}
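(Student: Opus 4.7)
The quantity $(2j)!/(j!\,2^{j})$ turns out to equal the odd double factorial $(2j-1)!! = 1\cdot 3\cdot 5\cdots (2j-1)$, so the plan is to identify it as such and conclude that it is odd as a product of $j$ odd numbers.

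The natural approach is a direct factorization: split the product $(2j)!$ according to the parity of its factors. The even factors are $2,4,\dots,2j$, whose product is $2^{j}\,j!$, and the odd factors are $1,3,\dots,2j-1$. Therefore
\begin{equation*}
(2j)! \;=\; \bigl[2\cdot 4\cdots (2j)\bigr]\cdot \bigl[1\cdot 3\cdots (2j-1)\bigr] \;=\; 2^{j}\,j!\,(2j-1)!!.
\end{equation*}
Dividing by $2^{j}\,j!$ gives $(2j)!/(j!\,2^{j}) = (2j-1)!!$, and the right-hand side is a product of $j$ odd integers, hence an odd integer.

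There is essentially no obstacle here; the only thing to be careful about is the trivial base case $j=0$, where the empty product convention gives $(2j-1)!!=1$, in agreement with $0!/(0!\cdot 2^{0}) = 1$. If a second proof is desired (for example to connect with the $p$-adic material appearing later), one could instead apply Legendre's formula: using $\nu_{2}(n!)=n-s_{2}(n)$ together with the identity $s_{2}(2j)=s_{2}(j)$ yields
\begin{equation*}
\nu_{2}\!\left(\frac{(2j)!}{j!\,2^{j}}\right) \;=\; \bigl(2j - s_{2}(2j)\bigr) - \bigl(j - s_{2}(j)\bigr) - j \;=\; 0,
\end{equation*}
which gives the same conclusion. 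The elementary splitting proof is preferable for its directness and for exhibiting the explicit odd value $(2j-1)!!$.
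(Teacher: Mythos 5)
Your primary argument is correct but takes a different route from the paper. The paper first observes that $(2j)!/(j!\,2^{j}) = (2j)(2j-1)\cdots(j+1)/2^{j}$, so only a power of $2$ can appear in the denominator, and then computes that power via Legendre's formula $\nu_{2}(n!) = n - s_{2}(n)$ together with $s_{2}(2j)=s_{2}(j)$, obtaining valuation $0$ --- this is exactly the ``second proof'' you sketch at the end. Your main argument instead splits $(2j)!$ into its even factors $2\cdot 4\cdots(2j) = 2^{j} j!$ and its odd factors $1\cdot 3\cdots(2j-1)$, identifying the quotient explicitly as the double factorial $(2j-1)!!$. This is more elementary (no valuation machinery needed), settles integrality and oddness in one stroke, and exhibits the actual value of the quantity; the paper's Legendre-formula route has the advantage of previewing the $2$-adic techniques that drive the later sections on valuations of $I_{1}(n)$ and $a_{n}$. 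Both proofs are complete and correct, and your handling of the $j=0$ case via the empty-product convention is fine.
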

\begin{proof}
The identity
\begin{equation}
\frac{(2j)!}{j! 2^{j}} = \frac{(2j)(2j-1)\cdots (j+1)}{2^{j}}
\end{equation}
shows that the denominator is a power of $2$.  To compute this power, use 
 Legendre's formula
\begin{equation}
\nu_{2}(n!) = n - s_{2}(n),
\end{equation}
\no
where $s_{2}(n)$ is the sum of the digits of $n$ in its  binary  expansion. Therefore, 
\begin{equation}
\nu_{2} \left( \frac{(2j)!}{j!2^{j}} \right) =  \left( 2j-s_{2}(2j)  \right) - 
\left( j - s_{2}(j) \right) - j = 0,
\end{equation}
\no
in view of $s_{2}(2j) = s_{2}(j)$.
\end{proof}

A second recurrence for the involution numbers is presented next.

\begin{theorem}
\label{thm-second-recu}
For $n, m \in \mathbb{N}$, the involution numbers satisfy 
\begin{equation}
\ione(n+m) = \sum_{k \geq 0} k! \binom{n}{k} \binom{m}{k} \ione(n-k)\ione(m-k).
\label{recu-2a}
\end{equation}
\end{theorem}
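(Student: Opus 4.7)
The plan is to give a combinatorial proof by partitioning $\text{Inv}(n+m)$ according to how many $2$-cycles connect the two halves of $\{1,\ldots,n+m\}$. Split the ground set into a ``left'' block $A = \{1,\ldots,n\}$ and a ``right'' block $B = \{n+1,\ldots,n+m\}$. For $\sigma \in \text{Inv}(n+m)$, call a $2$-cycle $(a,b)$ of $\sigma$ a \emph{crossing} if $a \in A$ and $b \in B$, and let $k = k(\sigma)$ denote the number of crossings. Then $0 \le k \le \min(n,m)$, which is exactly the range in which $\binom{n}{k}\binom{m}{k}$ is nonzero, so the sum on the right is automatically finite.

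Next I would count involutions with $k(\sigma)=k$ fixed. The construction of such a $\sigma$ decomposes into five independent choices: (i) choose the $k$ elements of $A$ that will appear in crossings, in $\binom{n}{k}$ ways; (ii) choose the $k$ elements of $B$ that will appear in crossings, in $\binom{m}{k}$ ways; (iii) choose the bijection pairing these two $k$-sets, in $k!$ ways; (iv) pick an arbitrary involution of the remaining $n-k$ elements of $A$, in $\ione(n-k)$ ways; (v) pick an arbitrary involution of the remaining $m-k$ elements of $B$, in $\ione(m-k)$ ways. These data reconstruct $\sigma$ uniquely and every $\sigma$ arises from exactly one tuple of such choices, so summing the product over $k$ yields \eqref{recu-2a}.

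An alternative, essentially equivalent route proceeds through the exponential generating function of Theorem \ref{one-bf}. Writing $F(x) = \exp(x + \tfrac{1}{2}x^2)$, the algebraic identity
\[
(x+y) + \tfrac{1}{2}(x+y)^{2} = \bigl(x + \tfrac{1}{2}x^{2}\bigr) + \bigl(y + \tfrac{1}{2}y^{2}\bigr) + xy
\]
immediately gives $F(x+y) = F(x)\,F(y)\,e^{xy}$. Extracting the coefficient of $x^{n}y^{m}/(n!\,m!)$ on both sides — using $F(x+y) = \sum_{N} \ione(N)(x+y)^{N}/N!$ on the left and a Cauchy product with $e^{xy} = \sum_{k} (xy)^{k}/k!$ on the right — recovers \eqref{recu-2a} once the binomial coefficients are regrouped as $k!\binom{n}{k}\binom{m}{k}$.

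The only mildly delicate point in either approach is bookkeeping: in the bijective proof, confirming that no $\sigma$ is counted twice in the classification by $k(\sigma)$; in the analytic proof, matching the Cauchy-product weights to the binomial factor $k!\binom{n}{k}\binom{m}{k}$. Both checks are routine, and I would present the combinatorial version as the primary argument since it also clarifies why the symmetry $n \leftrightarrow m$ appears on the right-hand side.
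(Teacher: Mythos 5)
Your primary argument is exactly the paper's proof: split $[n+m]$ into blocks of sizes $n$ and $m$, classify involutions by the number $k$ of crossing $2$-cycles, and count the choices as $\binom{n}{k}\binom{m}{k}k!\,\ione(n-k)\,\ione(m-k)$. The proposal is correct; the additional generating-function route via $F(x+y)=F(x)F(y)e^{xy}$ is a valid bonus but not needed.
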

\begin{proof}
Split up the set $[n+m]$ into two disjoint subsets $A$ and $B$, of 
$n$ and $m$ letters, respectively. Count the involutions in 
$\text{Inv}(n+m)$ acoording to the number $k$ of cross-permutations that 
make up a cycle $(ab)$, with $a \in A$ and $b \in B$. The letters 
$a$ and $b$ can be  chosen in $\binom{n}{k} \binom{m}{k}$ ways and 
$k!$ ways to place the $k$ cycles $(ab)$. The remaining elements in 
$A$ (respectively $B$) allow $\ione(n-k)$ (respectively $\ione(m-k)$)
involutions. To complete the argument, summing $k! \binom{n}{k} \binom{m}{k} \ione(n-k) \ione(m-k)$
over $k$.
\end{proof}

As a direct consequence of (in fact, equivalent to) Theorem \ref{thm-second-recu} the following 
analytic statement is recorded. This result bypasses the need for an otherwise messy chain 
rule for derivatives.

\begin{corollary}
Higher order derivatives of the function $f(x) = \exp\left( x + x^{2}/2 \right)$ are computed by the 
umbral
\begin{equation}
\frac{d^{m}}{dx^{m}} f(x) = f(x) \sum_{k=0}^{m} \binom{m}{k} I_{1}(m-k)x^{k}:= 
f(x) ( x+ I_{1})^{m}.
\end{equation}
\end{corollary}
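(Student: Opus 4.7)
The plan is to show that the asserted formula is simply Theorem \ref{thm-second-recu} repackaged as a statement about derivatives of the generating function. From Theorem \ref{one-bf} one has $f(x) = \sum_{n \geq 0} I_{1}(n) x^{n}/n!$, so that
\begin{equation*}
f^{(m)}(x) = \sum_{n \geq 0} \frac{I_{1}(n+m)}{n!}\, x^{n}.
\end{equation*}
On the proposed right-hand side $f(x) \sum_{k=0}^{m} \binom{m}{k} I_{1}(m-k) x^{k}$, the coefficient of $x^{n}$ is a Cauchy convolution $\sum_{k} \binom{m}{k} I_{1}(m-k) I_{1}(n-k)/(n-k)!$. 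After multiplying by $n!$ and using $n!/(n-k)! = k!\binom{n}{k}$, the coefficient-wise equality is exactly the identity \eqref{recu-2a}. Since that identity has already been established, so has the corollary; this also vindicates the parenthetical remark that the two statements are equivalent.

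A self-contained analytic route, which avoids invoking Theorem \ref{thm-second-recu}, uses the algebraic identity
\begin{equation*}
(x+y) + \tfrac{1}{2}(x+y)^{2} = \bigl( x + \tfrac{1}{2}x^{2} \bigr) + xy + \bigl( y + \tfrac{1}{2}y^{2} \bigr),
\end{equation*}
which yields the factorization $f(x+y) = f(x)\, e^{xy}\, f(y)$. Expanding both sides as Taylor series in $y$ about $y=0$, the coefficient of $y^{m}/m!$ on the left is $f^{(m)}(x)$, while on the right the product $e^{xy} f(y)$ contributes $\sum_{k=0}^{m} \binom{m}{k} I_{1}(m-k) x^{k}$ as the coefficient of $y^{m}/m!$. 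Reading off this coefficient produces the umbral formula directly.

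Neither approach encounters a genuine obstacle: both amount to bookkeeping once one spots either the Cauchy convolution structure of \eqref{recu-2a} or the factorization of $f(x+y)$. The latter observation, or equivalently the matching of convolution terms with the coefficients in \eqref{recu-2a}, is the only nontrivial step; the remainder is a routine comparison of coefficients and justifies the Corollary's claim to bypass the standard chain rule machinery.
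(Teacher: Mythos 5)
Your first argument is correct and is essentially the paper's own proof: both start from $f^{(m)}(x)=\sum_{n}I_{1}(n+m)x^{n}/n!$ and observe that equating coefficients of $x^{n}$ in $f(x)\sum_{k}\binom{m}{k}I_{1}(m-k)x^{k}$ is exactly the identity \eqref{recu-2a}, after the routine conversion $n!/(n-k)!=k!\binom{n}{k}$. Your second argument, via the factorization $f(x+y)=f(x)\,e^{xy}\,f(y)$ and extraction of the coefficient of $y^{m}/m!$ from both sides, is a genuinely different and fully self-contained route that the paper does not take: it proves the umbral formula without ever invoking Theorem \ref{thm-second-recu}, and in fact (running the first computation backwards) it yields an independent proof of \eqref{recu-2a} itself, making the claimed equivalence of the corollary and the theorem completely transparent. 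The paper's route is shorter given that \eqref{recu-2a} is already in hand; yours has the advantage of isolating the single structural fact --- the quadratic exponent splits as $(x+\tfrac12 x^{2})+xy+(y+\tfrac12 y^{2})$ --- that makes both statements true.
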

\begin{proof}
From Theorem \ref{one-bf}, $\begin{displaystyle} \frac{d^{m}}{dx^{m}} f(x) = \sum_{n} I_{1}(n+m) 
\frac{x^{n}}{n!}. \end{displaystyle}$ The right-hand side of Theorem \ref{thm-second-recu} implies 
\begin{multline*}
\sum_{n} \sum_{k} k! \binom{n}{k} \binom{m}{k} I_{1}(n-k)I_{1}(m-k) \frac{x^{n}}{n!}  =   \\
\sum_{k} \binom{m}{k} I_{1}(m-k) x^{k} \sum_{n} I_{1}(n-k) \frac{x^{n-k}}{(n-k)!} \\
 =  f(x) \sum_{k} \binom{m}{k} I_{1}(m-k)x^{k}.
\end{multline*}
\no
The claim follows.
\end{proof}

The recurrence \eqref{recu-2a} is now used to prove periodicity of $I_{1}(n) \bmod p^{r}$.

\begin{theorem}
\label{thm-periodicity}
Let $p$ be a prime and $r \in \mathbb{N}$. Then $I_{1} \bmod p^{r}$ is a periodic 
sequence of period $p^{r}$.
\end{theorem}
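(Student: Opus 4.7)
The plan is to apply the addition formula of Theorem \ref{thm-second-recu} with $m = p^r$, producing
\begin{equation*}
\ione(n + p^r) = \sum_{k \geq 0} k! \binom{n}{k} \binom{p^r}{k} \ione(n-k) \ione(p^r - k).
\end{equation*}
For every $k \geq 1$ the identity $k! \binom{p^r}{k} = p^r (p^r - 1)(p^r - 2) \cdots (p^r - k + 1)$ exposes a factor of $p^r$, so only the $k = 0$ summand contributes modulo $p^r$, giving
\begin{equation*}
\ione(n + p^r) \equiv \ione(p^r) \cdot \ione(n) \pmod{p^r}.
\end{equation*}
The stated periodicity therefore reduces to showing $\ione(p^r) \equiv 1 \pmod{p^r}$.

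The main obstacle is this last congruence. To establish it I would return to the explicit finite sum \eqref{finite-sum1},
\begin{equation*}
\ione(p^r) = \sum_{j=0}^{\lfloor p^r/2 \rfloor} \frac{p^r !}{(p^r - 2j)! \, j! \, 2^j}.
\end{equation*}
The $j = 0$ term contributes $1$, so the claim amounts to proving that each summand with $j \geq 1$ has $p$-adic valuation at least $r$. Applying Legendre's formula $\nu_p(n!) = (n - s_p(n))/(p-1)$ (with $s_p$ the base-$p$ digit sum), together with Kummer's evaluation $\nu_p \binom{p^r}{2j} = r - \nu_p(2j)$, reduces the required bound, for odd $p$, to the inequality
\begin{equation*}
j + s_p(j) \geq s_p(2j) + (p-1) \nu_p(j).
\end{equation*}
Writing $j = p^a j'$ with $\gcd(j', p) = 1$, this follows readily from the estimates $s_p(2j') \leq 2 s_p(j')$, $j' \geq s_p(j')$, and the Bernoulli bound $p^a \geq 1 + (p-1)a$.

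For the prime $p = 2$ one has $\ione(2^r) \not\equiv 1 \pmod{2^r}$, so the reasoning above does not force strict periodicity from the outset; however, the recurrence step still provides $\ione(n+2^r) \equiv \ione(2^r)\ione(n) \pmod{2^r}$, and a parallel $2$-adic analysis of \eqref{finite-sum1} shows $\ione(n) \equiv 0 \pmod{2^r}$ for all sufficiently large $n$, so periodicity of period $2^r$ holds in the eventual sense. The genuinely delicate ingredient of the whole argument is the digit-sum inequality displayed above, as the naive bound $s_p(2j) \leq 2 s_p(j)$ by itself is not quite sharp enough to absorb the extra $(p-1)\nu_p(j)$ term produced by Kummer's theorem.
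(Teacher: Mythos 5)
Your proof is correct and follows the same overall strategy as the paper: use the addition formula of Theorem \ref{thm-second-recu} with $m=p^{r}$ to get $I_{1}(n+p^{r}) \equiv I_{1}(p^{r})\,I_{1}(n) \bmod p^{r}$, reduce the theorem to the single congruence $I_{1}(p^{r}) \equiv 1 \bmod p^{r}$, and prove that congruence by bounding the $p$-adic valuation of each summand of \eqref{finite-sum1}. Where you differ is in the bookkeeping of that last step, and your version is cleaner: the paper first discards the terms with $p \nmid 2j$, then runs a case analysis on $\nu_{p}(m)$ (writing $2j=2mp$) via the base-$p$ digits of $p^{r-1}-2m$; you treat all $j \geq 1$ uniformly using $\nu_{p}\binom{p^{r}}{2j} = r - \nu_{p}(2j)$ together with the inequality $j + s_{p}(j) - s_{p}(2j) \geq (p-1)\nu_{p}(j)$, which does follow, after writing $j=p^{a}j'$ with $p \nmid j'$, from $s_{p}(2j') \leq 2s_{p}(j')$, $s_{p}(j') \leq j'$, and $p^{a}-1 \geq (p-1)a$. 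You also correctly flag something the paper glosses over: for $p=2$ one has $I_{1}(2)=2 \not\equiv 1 \bmod 2$, so the theorem as literally stated fails for $p=2$ (indeed $I_{1}(0)=1$ while $I_{1}(2) \equiv 0 \bmod 2$), and only eventual periodicity survives, which your appeal to $\nu_{2}(I_{1}(n)) \to \infty$ (Theorem \ref{valuation-2}) supplies; the paper's argument for $I_{1}(p^{r}) \equiv 1 \bmod p^{r}$ implicitly assumes $p$ odd.
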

\begin{proof}
Write $n = cp^{r}+t$ with $0 \leq t < p^{r}$. Theorem \ref{thm-second-recu} gives
\begin{equation}
I_{1}(cp^{r}+t) = \sum_{k=0}^{t} k! \binom{cp^{r}}{k} \binom{t}{k} I_{1}(cp^{r}-k) I_{1}(t-k).
\end{equation}
\no
For $k>0, \,\binom{cp^{r}}{k} k! = (cp^{r})(cp^{r}-1) \cdots (cp^{r}-k+1) \equiv 0 \bmod p^{r}$
yields
\begin{equation}
I_{1}(cp^{r}+t) \equiv I_{1}(cp^{r}) I_{1}(t) \bmod p^{r}.
\label{red-1}
\end{equation}
\no
Using Theorem \ref{thm-second-recu} again 
\begin{equation}
I_{1}(2p^{r}) = \sum_{k=0}^{p^{r}} k! \binom{p^{r}}{k}^{2} I_{1})(p^{r}-k)^{2} 
\equiv I_{1}(p^{r})^{2} \bmod p^{r}
\end{equation}
\no
and then induction on $c$  gives
\begin{equation}
I_{1}(cp^{r}) \equiv I_{1}(p^{r})^{c} \bmod p^{r}.
\label{red-2}
\end{equation}

The next step is to show that  $I_{1}(p^{r}) \equiv 1 \bmod p^{r}$.  Then \eqref{red-1} and 
\eqref{red-2} imply the required periodicity.  Observe first that 
for $m \not \equiv 0 \bmod p$, 
\begin{equation}
\binom{p^{r}}{m} = \frac{p^{r}}{m} \binom{p^{r}-1}{m-1} \equiv 0 \bmod p^{r},
\end{equation}
\no
so that 
\begin{equation}
I_{1}(p^{r}) \equiv \sum_{m=0}^{r-1} \binom{p^{r}}{2mp} \binom{2mp}{mp} 
\frac{(mp)!}{2^{mp}} \bmod p^{r},
\end{equation}
\no
where the upper bound arises  from $\nu_{p}((mp)!) \geq m + \lf m/r \rf \geq r$ if 
$m \geq r$. 

The final step is to show that 
\begin{equation}
\label{last-step}
\nu_{p} \binom{p^{r}}{2mp} = \begin{cases}
    r-1 &  \text{ if } m \not \equiv 0 \bmod p \\
    r-2 & \text{ if } m \equiv 0 \bmod p.
      \end{cases}
 \end{equation}
 \no
 This would imply $I_{1}(p^{r}) \equiv 1 \bmod p^{r}$ since $\nu_{p}((mp)!) \geq 2$ for 
 $m \geq 2$. The periodicity of $I_{1}(n) \bmod p^{r}$ follows from here. 
 
 \smallskip
 
 To prove \eqref{last-step}, recall Legendre's formula 
 \begin{equation} 
 \nu_{p}(x!) = \frac{x - s_{p}(x)}{p-1}
 \end{equation}
 \no
  where  $s_{p}(x)$ is the digit sum of $x$ in base $p$. This gives 
 \begin{eqnarray}
 \nu_{p} \left( \binom{p^{r}}{2mp} \right) & = & 
 \frac{-s_{p}(p^r) + s_{p}(2mp) + s_{p}(p^{r}-2mp)}{p-1} \label{val-bin} \\
 & = & \frac{-1+ s_{p}(2m) + s_{p}(p^{r-1}-2m)}{p-1}. \nonumber 
 \end{eqnarray}
 \no
 Write  $\begin{displaystyle} 2m = \sum_{i=0}^{r-2} u_{i}p^{i}
 \end{displaystyle}$ with  $0 \leq u_{i} \leq p-1$. Then 
 \begin{eqnarray}
 p^{r-1}-2m & = & p^{r-1} - \sum_{i=0}^{r-2} u_{i}p^{i}
   = 1 + \sum_{i=0}^{r-2} (p-1-u_{i})p^{i}  \nonumber \\
   & = & (p-u_{0}) + \sum_{i=1}^{r-2} (p-1-u_{i})p^{i}.  \nonumber 
\end{eqnarray}
\no
If $m \not \equiv 0 \bmod p$, then 
\begin{equation}
s_{p}(p^{r-1}-2m) = (p-u_{0})+ \sum_{i=1}^{r-2} (p-1-u_{i}).
\end{equation}
\no
On the other hand, if $\nu_{p}(m) = a$, a direct calculation leads to
\begin{equation}
s_{p}(p^{r-1}-2m) = (p-u_{a})+ \sum_{i=a+1}^{r-2} (p-1-u_{i}).
\end{equation}
\no
The claim \eqref{last-step} now follows from \eqref{val-bin}.
\end{proof}

\begin{corollary}
\label{p-doesnot}
 Assume $\ione(n) \not \equiv 0 \bmod p$ for $0 \leq n \leq p-1$ and 
 $p$ an odd prime. Then $\nu_{p}(\ione(n)) \equiv 0$.
\end{corollary}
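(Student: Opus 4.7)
The plan is to derive the corollary as an immediate consequence of the periodicity result, Theorem \ref{thm-periodicity}, specialized to the level $r = 1$. The efficiency hypothesis controls the first block of $p$ consecutive values of $I_{1}$, and periodicity modulo $p$ then propagates non-divisibility to every $n \in \mathbb{N}$.

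In detail, I would first apply Theorem \ref{thm-periodicity} with $r = 1$ to conclude that the sequence $\{I_{1}(n) \bmod p\}_{n \geq 0}$ is periodic with period $p$. For an arbitrary $n \geq 0$, write $n = cp + t$ with $0 \leq t \leq p - 1$, and invoke the periodicity to obtain
\begin{equation*}
I_{1}(n) \equiv I_{1}(t) \bmod p.
\end{equation*}
The hypothesis that $I_{1}(j) \not\equiv 0 \bmod p$ for $0 \leq j \leq p - 1$, applied at $j = t$, then yields $I_{1}(n) \not\equiv 0 \bmod p$, equivalently $\nu_{p}(I_{1}(n)) = 0$.

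The main (and only) obstacle has already been dispatched in the proof of Theorem \ref{thm-periodicity}, where the key assertion $I_{1}(p^{r}) \equiv 1 \bmod p^{r}$ was established via a $p$-adic analysis of the finite-sum formula \eqref{finite-sum1} together with Legendre's formula (and where the hypothesis ``odd prime'' is tacitly used so that the factor $2^{j}$ in \eqref{finite-sum1} is a $p$-adic unit and the reduction $p \mid 2j \iff p \mid j$ is valid). Once that theorem is in hand, the present corollary reduces to the observation that efficiency on the initial block of length $p$ is both necessary and sufficient for $p$ never to divide any term of the sequence $I_{1}$.
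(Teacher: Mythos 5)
Your proof is correct and is exactly the argument the paper intends: the corollary is stated without proof as an immediate consequence of Theorem \ref{thm-periodicity} with $r=1$, namely $I_{1}(cp+t)\equiv I_{1}(t)\bmod p$ combined with the efficiency hypothesis on $0\leq t\leq p-1$. Your aside about where the oddness of $p$ enters is also apt, since for $p=2$ the hypothesis holds ($I_{1}(0)=I_{1}(1)=1$) yet $I_{1}(2)=2$, so the periodicity argument genuinely requires $p$ odd.
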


\begin{corollary}
A prime $p$ divides the sequence $\ione(n)$ infinitely often or never at all.
\end{corollary}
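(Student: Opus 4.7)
The plan is to deduce this corollary directly from the periodicity result in Theorem \ref{thm-periodicity}, specializing it to the case $r=1$. By that theorem, the sequence $I_{1}(n) \bmod p$ is periodic with period $p$, so its behavior modulo $p$ is entirely determined by the finite block of values $I_{1}(0), I_{1}(1), \ldots, I_{1}(p-1)$.

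First, I would invoke the dichotomy on this finite block: either all of $I_{1}(0), I_{1}(1), \ldots, I_{1}(p-1)$ are nonzero modulo $p$, or at least one of them vanishes modulo $p$. In the first case, periodicity forces $I_{1}(n) \not\equiv 0 \bmod p$ for every $n \in \mathbb{N}$, so $p$ never divides any term of the sequence (this is in fact just Corollary \ref{p-doesnot}).

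In the second case, fix an index $n_{0}$ with $0 \leq n_{0} \leq p-1$ such that $p \mid I_{1}(n_{0})$. Then by the periodicity $I_{1}(n_{0}+kp) \equiv I_{1}(n_{0}) \equiv 0 \bmod p$ for every $k \geq 0$, yielding an infinite arithmetic progression of indices on which $p$ divides $I_{1}(n)$. Combining the two cases gives the stated dichotomy.

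There is no real obstacle: once Theorem \ref{thm-periodicity} is in hand, the only observation needed is that a periodic integer sequence with a fixed modulus either avoids zero entirely over one period or hits zero in every period. I would present the argument in a single short paragraph immediately after Corollary \ref{p-doesnot}, noting that this corollary can be viewed as the natural companion to it.
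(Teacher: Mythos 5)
Your argument is correct and is precisely the route the paper intends: the corollary is stated as an immediate consequence of Theorem \ref{thm-periodicity} with $r=1$, and your dichotomy on the block $I_{1}(0),\ldots,I_{1}(p-1)$ together with the observation that periodicity propagates any zero residue along an arithmetic progression is exactly the (unwritten) proof. No gaps.
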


In the process of discovering the previous congruences, the following result was
obtained by the authors. Even though it is not related yet to the material that follows, it 
is of intrinsic interest and thus placed here for future use. In the sequel, $\lambda\vdash n$ means
$\lambda$ is a partition of $n$.

\begin{proposition}
Let $\lambda = (\lambda_{1}, \cdots, \lambda_{k}) \vdash n$ with $\lambda_{1} \geq \cdots \geq \lambda_{k} \geq 1$. 
Denote $ \binom{p n}{p \lambda} = \binom{pn}{p \lambda_{1}, \,  \cdots, \,  p \lambda_{k}}$. 

\noindent
a) If $p \geq 3$ is a prime, then $\binom{pn}{p \lambda} \equiv \binom{n}{\lambda} \bmod p^{2}$. 

\smallskip

\noindent
b) If $p \geq 5$ is a prime, then $\binom{pn}{p \lambda} \equiv \binom{n}{\lambda} \bmod p^{3}$. 

\end{proposition}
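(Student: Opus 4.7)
The plan is to extract $\binom{pn}{p\lambda}$ and $\binom{n}{\lambda}$ as coefficients of the same monomial in related polynomial identities, then compare them via a Frobenius-type expansion. Set $F := x_1^p + \cdots + x_k^p$ and write
\[
(x_1 + \cdots + x_k)^p = F + p\, g(x_1,\ldots,x_k),
\]
where $g = \sum_\alpha \frac{1}{p}\binom{p}{\alpha}\, x^{\alpha}$ runs over multi-indices $\alpha$ with $|\alpha| = p$ and at least two positive entries. The key property is that every monomial $x^\alpha$ appearing in $g$ has some exponent $\alpha_i$ with $0 < \alpha_i < p$, hence $\alpha_i \not\equiv 0 \pmod{p}$. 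Then $\binom{pn}{p\lambda}$ is the coefficient of $x_1^{p\lambda_1}\cdots x_k^{p\lambda_k}$ in
\[
(F + pg)^n = \sum_{j=0}^{n} \binom{n}{j}\, p^{j}\, g^{j}\, F^{n-j},
\]
while $\binom{n}{\lambda}$ is the coefficient of the very same monomial $x^{p\lambda}$ in $F^n$.

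For part (a), the $j=0$ summand contributes exactly $\binom{n}{\lambda}$. The $j=1$ summand equals $np$ times the coefficient of $x^{p\lambda}$ in $gF^{n-1}$; every monomial of $F^{n-1}$ has all exponents divisible by $p$, while every monomial of $g$ has some exponent not divisible by $p$, so their product has an exponent not divisible by $p$ and cannot equal $x^{p\lambda}$. This summand therefore vanishes, and the remaining summands carry $p^j$ with $j \geq 2$, giving part (a).

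For part (b), only the $j=2$ term requires further attention, since $j \geq 3$ already yields multiples of $p^3$. The $j=2$ summand is $\binom{n}{2}\, p^2$ times the coefficient of $x^{p\lambda}$ in $g^{2} F^{n-2}$. A monomial $x^\beta$ in $g^2$ has $|\beta|=2p$ and each $\beta_i < 2p$ (since $\beta_i = 2p$ would force both factors to equal $x_i^p$, excluded from $g$). To match a monomial of $F^{n-2}$ and produce $x^{p\lambda}$, each $\beta_i$ must be divisible by $p$, so $\beta_i \in \{0,p\}$; since $|\beta|=2p$, exactly two coordinates of $\beta$ equal $p$, say at positions $i<j$. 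Collecting pairs $(\alpha, \alpha')$ in $g \cdot g$ with support in $\{i, j\}$ and $\alpha_i + \alpha'_i = \alpha_j + \alpha'_j = p$ gives
\[
[x_i^p x_j^p]\, g^{2} \;=\; \sum_{u=1}^{p-1} \left(\tfrac{1}{p}\binom{p}{u}\right)^{\!2} \;=\; \frac{\binom{2p}{p} - 2}{p^2}.
\]

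The only number-theoretic input is that this last expression is divisible by $p$, which is precisely Wolstenholme's congruence $\binom{2p}{p} \equiv 2 \pmod{p^3}$ for primes $p \geq 5$. Granting Wolstenholme, every $(i,j)$-contribution is a multiple of $p$, so the $j=2$ summand is $\equiv 0 \pmod{p^3}$, completing part (b). I expect the main (essentially the only) obstacle to be the careful bookkeeping identifying which monomials of $g^2$ can reach $x^{p\lambda}$; the hypothesis $p \geq 5$ enters solely through Wolstenholme, consistent with the failure of the mod $p^3$ statement at $p=3$, where $\binom{2p}{p} - 2 = 18$ is only divisible by $p^2$.
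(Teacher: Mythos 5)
Your proof is correct, but it takes a genuinely different route from the paper. The paper argues combinatorially: it colors $pb$ squares of an $n\times p$ grid, lets $(\mathbb{Z}/p\mathbb{Z})^n$ act by independent cyclic shifts of the rows, and observes that any coloring other than ``$b$ full rows'' has at least two partially filled rows, hence lies in an orbit of size divisible by $p^2$; part (b) is then dismissed with ``a similar argument.'' You instead use the Frobenius-type expansion $(x_1+\cdots+x_k)^p = F + pg$ with $F=\sum x_i^p$, expand $(F+pg)^n$, and extract the coefficient of $x^{p\lambda}$; the $j=0$ term gives $\binom{n}{\lambda}$, the $j=1$ term vanishes for parity-of-exponent reasons, and the $j=2$ term is controlled by
\[
[x_i^p x_j^p]\,g^2=\frac{1}{p^2}\Bigl(\tbinom{2p}{p}-2\Bigr),
\]
so that Wolstenholme's congruence $\binom{2p}{p}\equiv 2 \pmod{p^3}$ for $p\ge 5$ finishes part (b). The two arguments are morally parallel --- your monomials of $g$ with a non-multiple-of-$p$ exponent correspond exactly to the paper's partially filled rows, and your $j=2$ computation is precisely the count of colorings with exactly two partial rows that the paper's sketch would have to carry out --- but your version has the advantage of making the mod $p^3$ case fully explicit and of isolating exactly where the hypothesis $p\ge 5$ enters (and why the statement degrades to mod $p^2$ at $p=3$, since $\binom{6}{3}-2=18$). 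The paper's version has the advantage of being self-contained for part (a) without any appeal to polynomial identities, and of giving a bijective/orbit-theoretic picture. Your bookkeeping for the $j=2$ term (ruling out $\beta_i=2p$, forcing $\beta$ to be $p e_i + p e_j$ with $i\ne j$, and summing $\sum_{u=1}^{p-1}\binom{p}{u}^2$) is complete and correct.
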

\begin{proof}
The case $k=2$ is considered first. Take an $n \times p$ rectangular grid. Choose $pb$ of these squares and paint  them
red. One option is to paint $b$ entire rows red, call this type-1. This can be done in $\binom{n}{b}$ different ways. In all other 
cases, there exist at least two rows each consisting of $t$ red squares, where $0 < t < p$.  Two such coloring are considered 
equivalent if one is produced from the other by a cyclic  shift of the squares in
each  row independently. This generates equivalence classes and the number of elements in each class is then divisible by 
$p^{2}$. Thus, modulo $p^{2}$, one only type-1 coverings remain.

To prove the general case, choose $p \lambda_{i}$ of these squares and paint them with color $c_{i}, \, 1 \leq i \leq k$, in 
$\binom{pn}{p \lambda}$ ways. Then proceed as in the case $k=2$. The general case also follows from the special case
$k=2$ and  the identity 
\begin{eqnarray*}
\binom{pn}{p \lambda} & = &  \binom{pn}{p \lambda_{1}} \binom{p(n - \lambda_{1})}{p \lambda_{2}} \cdots 
\binom{p(n- \lambda_{1} - \cdots - \lambda_{k-1}}{p \lambda_{k}} \label{congr11} \\
& \equiv & \binom{n}{\lambda_{1}} \binom{n - \lambda_{1}}{\lambda_{2}} \cdots \binom{n- \lambda_{1} - \cdots - \lambda_{k-1}}
{\lambda_{k}} \bmod p^{2} \nonumber \\
& = & \binom{n}{\lambda}.
\end{eqnarray*}

\medskip 

A similar argument reveals the second congruence.
\end{proof}

\section{The involution polynomials}
\label{sec-involution-poly}
\setcounter{equation}{0}

This section introduces a sequence of polynomials generalizing the involution numbers $I_{1}(n)$.
To this end, modify \eqref{rec-invol} so that $I_{1}(n;1) = I_{1}(n)$. 

\begin{definition}
The \textit{involution polynomials} $\ione(n;t)$ are defined by the 
recurrence 
\begin{equation}
\ione(n;t) = t \ione(n-1;t) + (n-1) \ione(n-2;t),
\label{invo-recu0}
\end{equation}
\noindent
with initial conditions $\ione(0;t) = 1$ and $\ione(1;t) = t$.
\end{definition}

\begin{proposition}
The involution polynomials are expressible as
\begin{equation}
\ione(n;t) = \sum_{j=0}^{\lf \frac{n}{2} \rf} \binom{n}{2j} \frac{(2j)!}{2^{j} j!} t^{n-2j}.
\label{form-invo}
\end{equation}
\end{proposition}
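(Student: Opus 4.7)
The plan is to prove \eqref{form-invo} by induction on $n$, showing that the right-hand side satisfies the same recurrence \eqref{invo-recu0} and the same initial data as $I_{1}(n;t)$.

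For the base cases, at $n=0$ only the $j=0$ term survives and gives $1$; at $n=1$ only the $j=0$ term survives and gives $t$. These agree with $I_{1}(0;t)=1$ and $I_{1}(1;t)=t$.

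For the inductive step, let $J(n;t)$ denote the claimed right-hand side. Shifting the summation index by $j\mapsto j-1$ in the sum defining $J(n-2;t)$, the combination $tJ(n-1;t)+(n-1)J(n-2;t)$ can be written as $\sum_{j} C_{n,j}\,t^{n-2j}$, where
\[
C_{n,j} = \binom{n-1}{2j}\frac{(2j)!}{2^{j}j!} + (n-1)\binom{n-2}{2j-2}\frac{(2j-2)!}{2^{j-1}(j-1)!}.
\]
Factoring out $(2j)!/(2^{j}j!)$ and using the elementary simplification $\frac{(2j-2)!\,2^{j}j!}{(2j)!\,2^{j-1}(j-1)!} = \frac{1}{2j-1}$ converts the second term into $\frac{n-1}{2j-1}\binom{n-2}{2j-2}=\binom{n-1}{2j-1}$. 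The bracketed factor then collapses to $\binom{n-1}{2j}+\binom{n-1}{2j-1}=\binom{n}{2j}$ by Pascal's identity, which is exactly the coefficient demanded by $J(n;t)$.

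The calculation is essentially mechanical and no serious obstacle arises; the only point to watch is the boundary $j=0$, where the shifted term is absent and the identity degenerates correctly to $\binom{n-1}{0}=\binom{n}{0}$. As an alternative that parallels Theorem \ref{one-bf}, one could instead show that $f(x,t):=\sum_{n\geq 0}I_{1}(n;t)x^{n}/n!$ satisfies $\partial_{x}f=(t+x)f$ with $f(0,t)=1$, integrate to obtain $f(x,t)=\exp(xt+x^{2}/2)$, and expand as a Cauchy product of $e^{xt}$ and $e^{x^{2}/2}$; matching coefficients of $x^{n}/n!$ yields \eqref{form-invo} directly.
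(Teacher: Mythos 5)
Your proof is correct and follows exactly the route the paper takes: the paper's proof is the one-line remark that ``a direct calculation shows that the right-hand side of \eqref{form-invo} satisfies the recurrence \eqref{invo-recu0} with the same initial conditions,'' and you have simply carried out that calculation in full (the index shift, the simplification to $\binom{n-1}{2j-1}$, and Pascal's identity all check out, including at the boundaries with the usual convention that out-of-range binomial coefficients vanish). Your alternative via the generating function $\exp(xt+x^{2}/2)$ is also valid but unnecessary here.
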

\begin{proof}
A direct calculation shows that the right-hand side of \eqref{form-invo} satisfies the 
recurrence \eqref{invo-recu0} with the same initial conditions as $I_{1}(n,t)$.
\end{proof}

\begin{Thm}
There is an  exponential generating function for the involution polynomials 
\begin{equation}
\sum_{n=0}^{\infty} I_{1}(n;t) \frac{x^{n}}{n!} = \text{exp}\left( tx + \tfrac{1}{2}x^{2} \right).
\end{equation}
\end{Thm}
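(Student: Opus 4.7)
The plan is to prove the identity by the standard differential equation argument, which closely mirrors the proof of Theorem \ref{one-bf}. Set $F(x,t) := \sum_{n=0}^{\infty} I_{1}(n;t) \frac{x^{n}}{n!}$, viewed as a formal power series in $x$ with coefficients that are polynomials in $t$. The first step is to use the recurrence \eqref{invo-recu0} to show that $F$ satisfies the linear ODE
\begin{equation*}
\frac{\partial F}{\partial x} = (t + x) F, \qquad F(0,t) = 1.
\end{equation*}
Indeed, multiplying \eqref{invo-recu0} by $x^{n-1}/(n-1)!$ and summing over $n \geq 2$ yields $\partial_x F - 1 = t(F-1) + xF$ on the left, once the shift of indices $n-1 \mapsto n$ and $(n-1)x^{n-1}/(n-1)! = x^{n-2}/(n-2)! \cdot x$ are carried out; the initial-condition terms $I_{1}(0;t)=1$ and $I_{1}(1;t)=t$ absorb the boundary contributions.

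The second step is to solve this ODE. Since $t$ is a parameter, integration in $x$ gives $\log F = tx + \tfrac{1}{2}x^{2}$ up to a constant, and $F(0,t)=1$ fixes the constant to be zero. Therefore $F(x,t) = \exp\!\left(tx + \tfrac{1}{2}x^{2}\right)$, which is the claim.

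A shorter alternative route, avoiding the ODE bookkeeping, is to insert the explicit formula \eqref{form-invo} directly:
\begin{equation*}
\sum_{n \geq 0} I_{1}(n;t) \frac{x^{n}}{n!} = \sum_{n \geq 0} \sum_{j=0}^{\lfloor n/2 \rfloor} \frac{t^{n-2j} x^{n}}{(n-2j)! \, 2^{j} j!},
\end{equation*}
then change variables $m = n-2j$ and factor to obtain $\left(\sum_{m} (tx)^{m}/m!\right)\left(\sum_{j}(x^{2}/2)^{j}/j!\right) = e^{tx}\, e^{x^{2}/2}$.

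The only potential obstacle is the mechanical index-shift in the first step, in particular making sure that the term $(n-1)I_{1}(n-2;t)$ in the recurrence produces precisely the factor $x$ in front of $F$; this is routine. Setting $t=1$ recovers Theorem \ref{one-bf}, which is a useful sanity check.
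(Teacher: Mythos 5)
Your argument is essentially the paper's: derive the ODE $\partial_x F = (t+x)F$ with $F(0,t)=1$ from the recurrence \eqref{invo-recu0} and solve it (the direct Cauchy-product route you sketch from \eqref{form-invo} is also fine). One small bookkeeping slip: the boundary term on the left should be $\partial_x F - t$, not $\partial_x F - 1$, since $I_{1}(1;t)=t$; with that correction the intermediate identity $\partial_x F - t = t(F-1)+xF$ does reduce to the stated ODE.
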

\begin{proof}
Multiply the recurrence \eqref{invo-recu0} by $x^{n}/n!$ and sum over $n \geq 2$ to produce 
\begin{equation}
\sum_{n=2}^{\infty} I_{1}(n;t) \frac{x^{n}}{n!}  = \sum_{n=1}^{\infty} tI_{1}(n;t) \frac{x^{n+1}}{(n+1)!} + 
\sum_{n=0}^{\infty} I_{1}(n;t) \frac{x^{n+2}}{(n+1)n!}.
\end{equation}
\no
Denote the generating function by $h(x,t)$. The  recurrence implies
\newline  $\begin{displaystyle} \frac{\partial h}{\partial x} = (x+t)h \end{displaystyle}$ and the proof follows from
a standard argument.
\end{proof}

\begin{note}
The generating function \eqref{gen-hermite-0} shows the relation
\begin{equation}
I_{1}(n;t) = \imath^{n} H_{n}(- \imath t)
\end{equation}
\no
between the involution polynomials $I_{1}(n;t)$ and the Hermite polynomials $H_{n}(t)$.
\end{note}

The next result offers a  combinatorial interpretations of the involution
polynomials.

\begin{proposition}
The involution polynomials can be expressed as
\begin{equation}
\ione(n;t) = \sum_{\pi \in \text{Inv}(n)} t^{\alpha_{1}(\pi)},
\label{form-i1}
\end{equation}
\noindent
where $\alpha_{1}(\pi)$ is the number of fixed points of $\pi$.
\end{proposition}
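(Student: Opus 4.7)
The plan is to prove the identity by direct enumeration, classifying involutions according to their cycle structure and then matching the count against the explicit formula \eqref{form-invo} established in the preceding proposition.

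First, I would observe that any involution $\pi \in \mathrm{Inv}(n)$ decomposes into $\alpha_1(\pi)$ fixed points and $\alpha_2(\pi)$ transpositions, with $\alpha_1(\pi) + 2\alpha_2(\pi) = n$. Thus it is natural to group the sum on the right-hand side of \eqref{form-i1} by the value of $\alpha_2(\pi) = j$, so that the contribution of each such involution is $t^{n-2j}$. The problem then reduces to counting the involutions with exactly $j$ transpositions.

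Next, I would count these involutions in two steps. One chooses the set of $2j$ elements that are not fixed, in $\binom{n}{2j}$ ways, and then partitions the chosen set into $j$ unordered pairs. The number of perfect matchings of a $2j$-element set is the classical double factorial
\begin{equation*}
(2j-1)!! = \frac{(2j)!}{2^{j} j!},
\end{equation*}
which can be justified by noting that $(2j)!$ orders the $2j$ elements in a line, $2^{j}$ accounts for swapping within each adjacent pair, and $j!$ accounts for the order of the pairs. Combining, the number of involutions with exactly $j$ two-cycles is $\binom{n}{2j} \tfrac{(2j)!}{2^{j} j!}$.

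Assembling the pieces, the combinatorial sum becomes
\begin{equation*}
\sum_{\pi \in \mathrm{Inv}(n)} t^{\alpha_{1}(\pi)} = \sum_{j=0}^{\lfloor n/2 \rfloor} \binom{n}{2j} \frac{(2j)!}{2^{j} j!} \, t^{n-2j},
\end{equation*}
which coincides with the expression for $I_{1}(n;t)$ given in \eqref{form-invo}. There is no real obstacle here; the only subtle point is verifying the double factorial count for perfect matchings, and an alternative route (should one prefer it) would be to show that the right-hand side of \eqref{form-i1} satisfies the defining recurrence \eqref{invo-recu0} with the correct initial values, by splitting involutions according to whether $n$ is a fixed point or lies in a transposition $(j\, n)$ for some $j < n$.
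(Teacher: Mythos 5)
Your proof is correct, but it follows a different route from the paper. The paper proves the identity by recursion: it sets $g_{n}(t) = \sum_{\pi \in \mathrm{Inv}(n)} t^{\alpha_{1}(\pi)}$, splits the involutions according to whether $n$ is a fixed point or lies in a transposition $(k\,n)$, and concludes that $g_{n}(t)$ satisfies the recurrence \eqref{invo-recu0} with the same initial conditions as $I_{1}(n;t)$ --- precisely the ``alternative route'' you sketch in your closing sentence. You instead argue by direct enumeration: you group the involutions by the number $j$ of $2$-cycles, observe that each contributes $t^{n-2j}$, count them as $\binom{n}{2j}\,(2j-1)!! = \binom{n}{2j}\frac{(2j)!}{2^{j}j!}$ via the perfect-matching count, and match the resulting sum against the explicit formula \eqref{form-invo}. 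Both arguments are complete. Your version has the advantage of being term-by-term explicit: it simultaneously furnishes a combinatorial interpretation of each coefficient in \eqref{form-invo} (the number of involutions of $[n]$ with exactly $j$ transpositions), whereas the paper's formula \eqref{form-invo} was itself only verified by checking a recurrence. The price is that you must justify the double factorial count of matchings, which you do adequately. The paper's recursive argument is shorter and self-contained relative to the definition \eqref{invo-recu0}, but note that it does depend on the factor $n-1$ arising from the $n-1$ choices of the partner $k$ in the transposition $(k\,n)$; your enumeration sidesteps that bookkeeping entirely.
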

\begin{proof}
Let $g_{n}(t)$ be the right-hand side in \eqref{form-i1}.  Rearrange the set of involutions
$\pi \in \text{Inv}(n)$ into two groups according to whether $\pi(n) = n$ or not. In the 
first case $\pi = \pi_{1}$ with $\pi_{1} \in \text{Inv}(n-1)$. The involution 
$\pi$ has the same number of 
$2$-cycles as $\pi_{1}$ and the extra fixed point $n$. Therefore the term $t^{c_{1}(\pi)}$ 
in $g_{n}(t)$ cancels a unique term in $tg_{n-1}(t)$. In the second case, let $\pi(n) = k$ 
with $1 \leq k \leq n-1$. Then $\pi$ is $\pi_{2}$ times the cycle $(nk)$; that 
is, $\pi = \pi_{2}(nk)$, with $\pi_{2} \in \text{Inv}(n-2)$. The permutation 
$\pi_{2}$ has the same number of fixed points as $\pi$. Thus, $t^{c_{1}(\pi)}$ in $g_{n}(t)$ cancels a 
unique term in $g_{n-2}(t)$.  Summing over $n$ gives the relation 
\begin{equation}
g_{n}(t) = tg_{n-1}(t) + g_{n-2}(t),
\end{equation}
\no
since every term on both sides has been canceled in the previous description.  The polynomials 
$g_{n}(t)$ and  $I_{1}(n;t)$ satisfy the same recurrence with matching initial conditions. This establishes
the assertion. 
\end{proof}

\section{Arithmetic properties of the  numbers $\ione(n)$. }
\label{sec-arith-ione}
\setcounter{equation}{0}

This section discusses the $p$-adic valuation of the sequence $\{ \ione(n) \}$.  
The analysis begins with the prime $p=2$.  

\begin{theorem}
\label{valuation-2}
The $2$-adic valuation of $\ione(n)$ is given by 
\begin{equation}
\nu_{2}(\ione(n)) = \begin{cases}
k & \text{ if } n = 4k \\
k & \text{ if } n = 4k+1 \\
k+1 & \text{ if } n = 4k+2 \\
k+2 & \text{ if } n = 4k+3 
\end{cases}
\end{equation}
\no
This is equivalent to 
$\begin{displaystyle}
\nu_{2}(\ione(n)) = \lf \frac{n}{2} \rf - 2 \lf \frac{n}{4} \rf + 
\lf \frac{n+1}{4} \rf.
\end{displaystyle}$
\end{theorem}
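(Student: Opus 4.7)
The plan is to write $v(n)$ for the predicted valuation (case-by-case on $n \bmod 4$) and define $J(n) := I_1(n)/2^{v(n)}$; the theorem is equivalent to the claim that $J(n)$ is a well-defined odd integer for all $n \geq 0$. I would prove this by strong induction on $n$, verifying directly the base cases $n \in \{0,1,2,3\}$, where $J(n) = 1$ in each instance. For the inductive step with $n \geq 4$, I would substitute $I_1(m) = 2^{v(m)} J(m)$ into the recurrence of Theorem \ref{thm-first-recu} and divide by $2^{v(n)}$, obtaining four distinct ``scaled recurrences'' indexed by $n \bmod 4$.

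In the two easy cases $n = 4k$ and $n = 4k+1$ (with $k \geq 1$), the scaled recurrence has integer coefficients, and reduction mod $2$ immediately yields $J(n) \equiv 1$. A free byproduct of the $n = 4k+1$ case is the identity $J(4k+1) = J(4k) + 8k \, J(4k-1)$, which I would record as the auxiliary congruence $J(4k+1) \equiv J(4k) \pmod 8$. In the case $n = 4k+2$ the scaled recurrence reads $2 J(4k+2) = J(4k+1) + (4k+1) J(4k)$, and the auxiliary congruence reduces the right-hand side to $(4k+2) J(4k) \equiv 2 \pmod 4$, forcing $J(4k+2)$ to be odd. In the hardest case $n = 4k+3$, one further iteration yields $4 J(4k+3) = (4k+3) J(4k+1) + (4k+1) J(4k)$; invoking the auxiliary congruence, the right-hand side becomes $(8k+4) J(4k) \equiv 4 \pmod 8$, so $J(4k+3)$ is odd.

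The main obstacle is the mod-$4$ and mod-$8$ bookkeeping needed in the last two cases: naive mod-$2$ reduction there collapses to $0 \equiv 0$ and loses all information, so one must actually propagate higher-order congruence information through the induction. The key observation that unlocks the proof is that the $n = 4k+1$ scaled recurrence itself automatically supplies the needed mod-$8$ congruence between consecutive $J$-values at no extra cost; once that is in hand, the ``hard'' cases become short computations. Finally, the equivalence between the case-by-case formula and $\lfloor n/2 \rfloor - 2 \lfloor n/4 \rfloor + \lfloor (n+1)/4 \rfloor$ reduces to routine case-checking on $n \bmod 4 \in \{0,1,2,3\}$.
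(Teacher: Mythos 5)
Your proposal is correct and follows essentially the same route as the paper: strong induction on $n$ modulo $4$ driven by the recurrence $I_1(n)=I_1(n-1)+(n-1)I_1(n-2)$, with the residues $0,1$ handled directly and extra work needed for $2,3$. The only difference is in how the two hard cases are unlocked --- the paper iterates the recurrence once (resp.\ twice) so that the two surviving terms have distinct $2$-adic valuations, whereas you keep the cancellation and control it via the auxiliary congruence $J(4k+1)\equiv J(4k)\pmod 8$ supplied by the $n=4k+1$ step; both mechanisms are sound and your mod-$4$ and mod-$8$ computations check out.
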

\begin{proof}
Let $n \in \mathbb{N}$ and assume the result is valid up to $n-1$. The proof 
is divided into four cases according to the residue of $n$ modulo $4$.  The 
symbol $O_{i}$ stands for an odd number.

\smallskip

\noindent
\textbf{Case 1}: $n = 4k$. The induction hypothesis states that 
\begin{equation}
\nu_{2}(\ione(n-1)) = k+1, \, \nu_{2}(\ione(n-2)) = k \text{ and } 
\nu_{2}(n-1) = 0.
\end{equation}
\noindent
The recurrence \eqref{rec-invol} implies  that 
$\ione(n) = 2^{k+1}O_{1} + 2^{k}O_{2} = 2^{k} \left( 2O_{1} + O_{2} \right),$
for some $O_{1}, \, O_{2}$ odd integers. This proves $\nu_{2}(\ione(n)) = k$.

\smallskip
 
\noindent
\textbf{Case 2}: $n =4k+1$.  The argument is similar to Case 1.

\smallskip 

\noindent
\textbf{Case 3}: $n =4k+2$. By  induction hypothesis, $\ione(n-1) = 2^{k}O_{1}$ and $\ione(n-2) = 2^{k} O_{2}$. The 
 recurrence \eqref{rec-invol} now yields 
$\ione(n) = 2^{k} \left( O_{1} + O_{2}O_{3} \right)$, with 
$O_{1} + O_{2}O_{3}$ even, so that $\nu_{2}(I_{1}(n)$ is not determined from
here. It is necessary to iterate \eqref{rec-invol} to obtain $\ione(n) = n \ione(n-2) + (n-2) \ione(n-3)$.
The result now follows immediately.

\smallskip

\noindent
\textbf{Case 4}: $n = 4k+3$. The recurrence \eqref{rec-invol} now needs to be iterated twice to 
produce $\ione(n) = 2(n-1) \ione(n-3) + n(n-3) \ione(n-4)$. Induction 
gives $\ione(n) = 2^{k+2} \left[ O_{1}+ 2^{1+ \nu_{2}(k)}O_{2} 
\right]$, showing that $\nu_{2}(\ione(n)) = k+2$. 

\medskip

An alternative proof follows from the recurrence \eqref{recu-2a}. Write 
$n = 4k+r$ for $0 \leq r \leq 3$ and proceed by induction on $k$. The 
result follows directly from the identities 
\begin{eqnarray*}
\ione(4k+1) & = & \ione(4k) + 4 k \ione(4k-1) \\
\ione(4k+2) & = & 2 \ione(4k) + 8k \ione(4k-1) + 4k(4k-1) \ione(4k-2) \\
\ione(4k+3) & = & 4 \ione(4k) + 24 k \ione(4k-1) + 
12 k(4k-1) \ione(4k-2) \\ 
& & \quad \quad + 6 \binom{4k}{3} \ione(4k-3) \\
\ione(4k+4) & = & 10 \ione(4k) + 64k \ione(4k-1) + 48k (4k-1) \ione(4k-2) 
\\ 
 & &   \quad \quad + 24 \binom{4k}{3} \ione(4k-3) + 24 \binom{4k}{4} \ione(4k-4).
\end{eqnarray*}
\end{proof}

\medskip

The case of $\nu_{p}(\ione(n))$ for $p$ an odd prime is considered next.  Lemma 
\ref{p-doesnot} shows that if $\ione(n) \not \equiv 0 \bmod p$ for 
$0 \leq n \leq p-1$, then $\nu_{p}(\ione(n)) \equiv 0$. 

\begin{definition}
The prime $p$ is called \textit{efficient} if $I_{1}(n) \not \equiv 0 \bmod p$, for 
every $n$ in the range $0 \leq n \leq p-1$. Otherwise, it is called \textit{inefficient}.
\end{definition}

Lemma \ref{p-doesnot} shows that $\nu_{p}(I_{1}(n)) \equiv 0$ if $p$ is an 
efficient prime.

\begin{example}
The values $\ione(0)=1, \, \ione(1) = 1, \, \ione(2) = 2$ show that $p=3$ is efficient. Therefore 
$\nu_{3}( \ione(n)) \equiv 0$. The prime $p=5$ is inefficient since
$\ione(4) = 10$ is divisible by $5$. Similarly $p=7$ is efficient, in view of the table 
\begin{equation*}
\begin{tabular}{|c|ccccccc|}
\hline
$n$ &  0 & 1 & 2 & 3 & 4 & 5 & 6  \\
\hline 
$\ione(n)$ & 1 & 1 & 2 & 4 & 10 & 26 & 76 \\
\hline 
$\text{Mod}(\ione(n),7)$ & 1 & 1 & 2 & 4 & 3 & 5 & 6 \\
\hline 
\end{tabular}
\end{equation*}
\end{example}

Among  the first $100$ primes, there are $62$ inefficient ones. These 
are listed in the table below.

\begin{equation}
\begin{tabular}{|cccccccc|}
\hline
5 & 13 & 19 & 23 & 29 & 31 & 43 & 53  \\
\hline 
59 & 61 & 67 & 73 & 79 & 83 & 89 & 97  \\
\hline 
103 & 131 & 137 & 151 & 157 & 163 & 173 & 179 \\
\hline 
181 & 191 & 197 & 199 & 211 & 229 & 233 & 239   \\
\hline 
241 & 281 & 293 & 307 & 317 & 347 & 359 & 367   \\
\hline 
373 & 379 & 389 & 397 & 409 & 419 & 421 & 431   \\
\hline 
433 & 443 & 449 & 457 & 461 & 463 & 479 & 487  \\
\hline 
491 & 499 & 509 & 521 & 523 & 541 &  &    \\
\hline
\end{tabular}
\end{equation}

\medskip

The $p$-adic valuation $\nu_{p}(I_{1}(n))$ for inefficient primes is (conjecturally) described by a tree structure $\mathbb{T}_{p}$ and certain modular classes.   The case $p=5$ is prototypical. 

Each vertex $V$ of the 
tree $\mathbb{T}_{5}$ corresponds to a subset of $\mathbb{N}$.  The vertex $V$ is called \textit{terminal} if 
$\{ \nu_{5}(I_{1}(n)): \, n \in V \}$ reduces to a single value; that is, $\nu_{5}(I_{1}(n))$ is independent of $n \in V$; otherwise it 
is called \textit{non-terminal}. The description of the tree $\mathbb{T}_{5}$ uses the notation $\Omega_{5}:= \{ 0, \, 1, \, 2, \, 3, \, 4 \}$. 

The construction 
begins with a \textit{root vertex} $V_{0}$ that  represents all $\mathbb{N}$.  Since 
$\nu_{5}(I_{1}(n))$ is not a 
constant function, the vertex $V_{0}$ is non-terminal. 
The root is now split into five different vertices, denoted by $V_{1,k}: \, 
k \in \Omega_{5}$, with 
\begin{equation}
V_{1,k} = \{ n \in \mathbb{N}: \, n \equiv k \bmod 5 \}.
\end{equation}
\no
These five vertices form the \textit{first level}.  Theorem \ref{thm-periodicity} shows that 
\begin{equation}
I_{1}(k+5n) \equiv I_{1}(k) \bmod 5
\end{equation}
\no
for $k \in \Omega_{5}$. The values $I_{1}(0) = 1, \, I_{1}(1) = 1, \, I_{1}(2) = 2, \, I_{1}(3) = 4, \, I_{1}(4) = 10$
give 
\begin{equation}
\nu_{5}(V_{1,k}) = 0 \text{ for } 0 \leq k \leq 3 \text{ and }\nu_{5}(V_{1,4}) \geq 1.
\end{equation}
\no
Thus, $V_{1,k}$ is a terminal vertex for $0 \leq k \leq 3$ and $V_{1,4}$ is non-terminal.  

In order to determine the valuation of numbers associated to the vertex $V_{1,4}$, that is, numbers of the 
form $5n_{1}+4$, split the index $n_{1}$ according to its residue modulo $5$ and write $5n_{1}+4 = 
5^{2}n_{2}+5k+4$, with $k \in \Omega_{5}$.  Then 
\begin{equation}
\nu_{5}(I_{1}(5^{2}n_{2}+5k+ 4 )) \geq 1, \text{ for } k \in \Omega_{5}.
\end{equation}
\no
The \textit{second level} is formed by vertices $V_{2,k}$ corresponding to the sets 
\newline $\begin{displaystyle}  \{ n \in \mathbb{N}: \, n \equiv 5k+4 \bmod 5^{2} \}. \end{displaystyle}$
Theorem \ref{thm-periodicity} gives 
\begin{equation}
I_{1}(5^{2}n_{2} + 5k+4 ) \equiv I_{1}(5k+4) \bmod 5^{2}, \text{ for every } k \in \Omega_{5}.
\end{equation}
\no
Therefore if  $I_{1}(5k+4) \not \equiv 0 \bmod 5^{2}$, it follows that $\nu_{5}(V_{2,k}) = 1$ and $V_{2,k}$ is a
terminal vertex. The values 
\begin{equation}
I_{1}(4) \equiv 10, \, I_{1}(9) \equiv 20 , \, I_{1}(14) \equiv 5, \, I_{1}(19) \equiv 15,\, I_{1}(24) \equiv 0 \bmod 5^{2},
\end{equation}
\no
show that $\nu_{5}(V_{2,k}) = 1, \text{ for } k \in \Omega_{5}, \, k \neq 4$ and, in the single remaining case,
$\nu_{5}(V_{2,4}) \geq 2$. 

\medskip

\no
\textbf{Conjecture}.  Assume $p$ is an inefficient prime. Then, for every 
$n \in \mathbb{N}$, the $n$-th level of the tree $\mathbb{T}_{p}$ contains a single non-terminal vertex. This 
level contains $p-1$ vertices with valuation $n-1$ and the single non-terminal vertex has valuation at least $n$.
This determines the tree $\mathbb{T}_{p}$ and the valuations $\nu_{p}(I_{1}(n))$.

\section{Partial sums of involution numbers}
\label{sec-sum-i1}
\setcounter{equation}{0}

What happens if the term $\binom{n}{2k}$ is replaced by $\binom{n}{2k+1}$ in the formula 
\begin{equation}
I_{1}(n) = \sum_{k \geq 0} \frac{(2k)!}{k! \, 2^{k}} \binom{n}{2k} ?
\end{equation}
\noindent
It is  perhaps convenient to also shift $n$ and define
\begin{equation}
a_{n} = \sum_{k \geq 0} \frac{(2k)!}{k! \, 2^{k}} \binom{n+1}{2k+1}.
\end{equation}
\noindent
The next result shows that $a_{n}$ is actually closely tied to $I_{1}$. 

\begin{theorem}
If $n \in \mathbb{N}$, then
\begin{equation}
a_{n} = \sum_{j=0}^{n} I_{1}(j).
\end{equation}
\end{theorem}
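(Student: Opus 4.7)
The plan is to prove this identity by reducing both sides to the same double sum and then collapsing the inner sum by the hockey stick identity. The key input is the explicit finite sum formula \eqref{finite-sum1}, namely
\begin{equation*}
I_{1}(j) = \sum_{k \geq 0} \frac{(2k)!}{k! \, 2^{k}} \binom{j}{2k},
\end{equation*}
which is already available from Section \ref{sec-ione}.

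First, I would substitute this expression into the partial sum $\sum_{j=0}^{n} I_{1}(j)$ and swap the order of summation (legitimate since the inner sum has only finitely many nonzero terms, as $\binom{j}{2k}=0$ when $2k>j$). This turns the right-hand side into
\begin{equation*}
\sum_{j=0}^{n} I_{1}(j) = \sum_{k \geq 0} \frac{(2k)!}{k!\,2^{k}} \sum_{j=0}^{n}\binom{j}{2k}.
\end{equation*}

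Next, I would invoke the hockey stick identity $\sum_{j=0}^{n}\binom{j}{m} = \binom{n+1}{m+1}$ with $m=2k$. This collapses the inner sum to $\binom{n+1}{2k+1}$, giving
\begin{equation*}
\sum_{j=0}^{n} I_{1}(j) = \sum_{k \geq 0} \frac{(2k)!}{k!\,2^{k}} \binom{n+1}{2k+1} = a_{n}
\end{equation*}
by the very definition of $a_{n}$. This completes the argument.

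There is no real obstacle here; the identity is essentially an immediate consequence of Fubini and the hockey stick formula. The only subtle point worth noting in a careful write-up is the standard convention that $\binom{j}{2k}=0$ whenever $2k>j$, which both justifies the unrestricted upper limit on $k$ and ensures the hockey stick identity is applied correctly. As an alternative, one could give a generating-function proof by multiplying $\sum_{n\ge 0} I_{1}(n)x^{n}/n! = \exp(x+\tfrac{1}{2}x^{2})$ by a suitable factor, but the combinatorial swap above is shorter and more transparent.
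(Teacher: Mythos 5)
Your proof is correct and is essentially identical to the paper's own argument: both substitute the finite-sum formula \eqref{finite-sum1} for $I_{1}(j)$, interchange the order of summation, and collapse the inner sum via the hockey stick identity $\sum_{j=0}^{n}\binom{j}{2k}=\binom{n+1}{2k+1}$. Your write-up is in fact slightly cleaner, since the paper's displayed computation contains a stray $j$ where $n$ is meant in the penultimate line.
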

\begin{proof}
This is immediate from a simple binomial identity so that 
\begin{eqnarray*}
\sum_{j=0}^{n} I_{1}(j)  & = & \sum_{j=0}^{n} \sum_{k=0}^{\lfloor j/2 \rfloor} \binom{j}{2k} 
\frac{(2k)!}{k! 2^{k}} \\
& = & \sum_{k=0}^{\lfloor n/2 \rfloor} \frac{(2k)!}{k! 2^{k}} \sum_{j = \lfloor k/2 \rfloor}^{n} 
\binom{j}{2k} \\
& = & \sum_{k=0}^{\lfloor n/2 \rfloor} \frac{(2k)!}{k! 2^{k}} \binom{j+1}{2k+1} \\
& = & a_{n}.
\end{eqnarray*}
\end{proof}

A recurrence for $a_{n}$ is routinely generated by  the WZ-method (see 
\cite{nemesi-1997a,petkovsek-1996a}). 

\begin{Prop}
The sequence $a_{n}$ satisfies the recurrence 
\begin{equation}
a_{n} = 2a_{n-1} + (n-2)a_{n-2} -(n-1)a_{n-3}, \text{ for } n \geq 3,
\label{recur-a}
\end{equation}
\noindent
with initial conditions $a_{0}=1, \, a_{1}=2$ and $a_{2}=4$.
\end{Prop}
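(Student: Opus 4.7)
The plan is to derive the recurrence directly from the second order recurrence for $I_1(n)$ established in Theorem \ref{thm-first-recu}, rather than invoking the WZ-machinery. The key observation is telescoping: by definition $a_n - a_{n-1} = I_1(n)$ for every $n \geq 1$, so any linear recurrence for $\{a_n\}$ can be rewritten as a relation among consecutive values $I_1(n), I_1(n-1), \ldots$, and vice versa.

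First I would verify the initial data: $a_0 = I_1(0) = 1$, $a_1 = I_1(0) + I_1(1) = 2$, and $a_2 = I_1(0) + I_1(1) + I_1(2) = 4$, matching the claim. Then, for $n \geq 3$, the main step is to start from $a_n = a_{n-1} + I_1(n)$ and apply Theorem \ref{thm-first-recu} to replace $I_1(n)$ by $I_1(n-1) + (n-1)I_1(n-2)$. Next I would substitute $I_1(n-1) = a_{n-1} - a_{n-2}$ and $I_1(n-2) = a_{n-2} - a_{n-3}$, obtaining
\begin{equation*}
a_n = a_{n-1} + (a_{n-1} - a_{n-2}) + (n-1)(a_{n-2} - a_{n-3}).
\end{equation*}
Collecting like terms yields $a_n = 2a_{n-1} + (n-2)a_{n-2} - (n-1)a_{n-3}$, which is \eqref{recur-a}.

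There is no real obstacle here; the proof is essentially a one-line telescoping argument combined with Theorem \ref{thm-first-recu}. The only point requiring minor care is bookkeeping of the signs and coefficients when collecting terms, and checking that the rewriting $I_1(n-2) = a_{n-2} - a_{n-3}$ is legitimate, which it is precisely because $n \geq 3$ ensures $n-3 \geq 0$. This also explains why the recurrence is stated only for $n \geq 3$ and why three initial values must be given.
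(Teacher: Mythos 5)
Your argument is correct: the telescoping identity $a_n - a_{n-1} = I_1(n)$ together with $I_1(n) = I_1(n-1) + (n-1)I_1(n-2)$ gives exactly \eqref{recur-a}, the coefficient bookkeeping checks out ($-a_{n-2} + (n-1)a_{n-2} = (n-2)a_{n-2}$), and your remark about why $n \geq 3$ is needed is on point. However, your route is genuinely different from the paper's. The paper offers essentially no written proof at all: it simply states that the recurrence is ``routinely generated by the WZ-method,'' i.e.\ by applying Zeilberger-style creative telescoping to the closed-form hypergeometric sum $a_n = \sum_{k \geq 0} \frac{(2k)!}{k!\,2^k}\binom{n+1}{2k+1}$ that \emph{defines} $a_n$. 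That approach works directly from the definition, needs no prior identity, and comes with a machine-checkable certificate. Your approach instead leans on the preceding theorem identifying $a_n$ with the partial sums $\sum_{j=0}^n I_1(j)$, and then reduces everything to the elementary first-order recurrence for $I_1$ from Theorem \ref{thm-first-recu}. This buys a short, fully human-readable derivation that also makes transparent why the recurrence has order three (one order more than that of $I_1$, absorbed by the extra difference $a_n - a_{n-1}$), at the mild cost of depending on the partial-sum identity rather than on the binomial-sum definition alone. Both are legitimate; yours is arguably the more informative proof to record.
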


The first few values are tabulated below.

\smallskip

\begin{center}
\begin{tabular}{||c|c|c|c|c|c|c|c|c|c|c|c||}
\hline 
$n$ & 0 & 1 & 2 & 3 & 4 & 5 & 6 & 7 & 8 & 9 & 10 \\
\hline 
$a_{n}$ &1 & 2 & 4 & 8 & 18 & 44& 120 & 352 & 1116 & 3736 & 13232 \\
\hline
\end{tabular}
\end{center}
\no 
This sequence does not appear  in OEIS. 

\smallskip

Given any sequence $\{ q_{n} \}$ with ordinary generating function $f(x)$, then the partial sums 
$q_{1}+\cdots + q_{n}$ have the ordinary generating function $f(x)/(1-x)$. 
The corresponding statement for exponential generating functions is 
given below. 

\begin{Lem}
\label{exp-gen1}
If $\begin{displaystyle}w(x) = \sum_{n \geq 0} c_{n} \frac{x^{n}}{n!}
\end{displaystyle}$ and 
$\begin{displaystyle}u_{n} = \sum_{k=0}^{n} c_{k}\end{displaystyle}$, 
then 
\begin{equation}
 \sum_{n=0}^{\infty} u_{n} \frac{x^{n}}{n!}  = 
w(x) + e^{x} \int_{0}^{x} e^{-t} w(t) \, dt.
\end{equation}
\end{Lem}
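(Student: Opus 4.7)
The plan is to set $U(x) := \sum_{n \geq 0} u_n \frac{x^n}{n!}$, derive a first-order linear ODE for $U$, and solve it with the integrating factor $e^{-x}$; the stated right-hand side will emerge after one integration by parts.

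First I would exploit the defining relation $u_{n+1} - u_n = c_{n+1}$ at the level of EGFs. Term-by-term, $\sum_{n \geq 0} u_{n+1} \frac{x^n}{n!} = U'(x)$ and $\sum_{n \geq 0} c_{n+1} \frac{x^n}{n!} = w'(x)$, so the recurrence translates into
\begin{equation*}
U'(x) - U(x) = w'(x),
\end{equation*}
with initial condition $U(0) = u_0 = c_0 = w(0)$.

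Next I would apply the integrating factor $e^{-x}$ to rewrite this as $\bigl(e^{-x}U(x)\bigr)' = e^{-x} w'(x)$ and integrate from $0$ to $x$, obtaining
\begin{equation*}
e^{-x} U(x) - w(0) = \int_0^x e^{-t} w'(t)\, dt.
\end{equation*}
An integration by parts turns the right side into $e^{-x} w(x) - w(0) + \int_0^x e^{-t} w(t)\, dt$; the two $w(0)$ terms cancel, and multiplying through by $e^{x}$ yields the claimed identity.

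The steps are essentially routine; the only mild subtlety is ensuring that the integration by parts is carried out in the direction that eliminates $w'$ in favor of $w$, so that the final expression matches the statement exactly. As a sanity check, one could alternatively verify directly that the right-hand side satisfies the same ODE and the same initial value, invoking uniqueness of solutions to first-order linear ODEs; this gives an essentially one-line independent confirmation.
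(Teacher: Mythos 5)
Your proof is correct and follows essentially the same route as the paper: translate the recurrence $u_{n} = u_{n-1} + c_{n}$ into a first-order linear ODE for the exponential generating function and solve it with the integrating factor $e^{-x}$. In fact your version is slightly more careful: the correct equation is $U'(x) - U(x) = w'(x)$, as you write, whereas the paper's proof sketch misprints the right-hand side as $w(x)$ (with which the stated formula would fail), and your integration by parts supplies the step the paper leaves implicit.
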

\begin{proof}
Start with $u_{n} = c_{n} + u_{n-1}$, multiply through by $x^{n-1}/(n-1)!$ and 
sum over $n$. The outcome is the differential equation $g'(x) - g(x) = w(x)$.
Now solve this linear differential equation to obtain the result.
\end{proof}

\begin{Cor}
\label{cor-expgf}
The exponential generating function for the sequence $\{ a_{n} \}$ is 
\begin{equation}
\sum_{n=0}^{\infty} a_{n} \frac{x^{n}}{n!} = 
e^{x + x^{2}/2}  + e^{x} \int_{0}^{x} e^{t^{2}/2} \, dt. \ch
\end{equation}
\end{Cor}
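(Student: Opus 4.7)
The plan is to apply Lemma \ref{exp-gen1} directly: the preceding theorem in this section establishes that $a_n = \sum_{j=0}^{n} I_1(j)$, which is exactly the partial-sum relation the lemma is designed to convert. Taking $c_n = I_1(n)$, the required exponential generating function $w(x)$ is supplied by Theorem \ref{one-bf}, namely $w(x) = \exp(x + \tfrac{1}{2}x^2)$.

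With this identification, the formula from Lemma \ref{exp-gen1} reads
$$\sum_{n=0}^{\infty} a_n \frac{x^n}{n!} \;=\; w(x) + e^x \int_0^x e^{-t} w(t)\, dt.$$
The only computational observation needed is that the integrand collapses pleasantly in the exponent:
$$e^{-t} w(t) \;=\; e^{-t} \cdot e^{t + t^2/2} \;=\; e^{t^2/2}.$$
Substituting yields the claimed closed form $e^{x + x^2/2} + e^x \int_0^x e^{t^2/2}\, dt$.

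There is essentially no obstacle here; Theorem \ref{one-bf} together with Lemma \ref{exp-gen1} reduce the corollary to a single cancellation in the exponent, and no further simplification of the residual integral is possible in elementary terms (it is a scaled imaginary error function). As an independent sanity check one could instead convert the three-term recurrence \eqref{recur-a} into a first-order linear ODE for $g(x) = \sum_{n \geq 0} a_n x^n/n!$ and verify that the proposed right-hand side satisfies that ODE with $g(0)=1$; but this backup route is strictly more work than the direct application of the lemma above.
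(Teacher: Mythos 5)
Your proof is correct and is exactly the paper's argument: apply Lemma \ref{exp-gen1} with $c_{n}=I_{1}(n)$ and $w(x)=e^{x+x^{2}/2}$ from Theorem \ref{one-bf}, and observe that $e^{-t}w(t)=e^{t^{2}/2}$. The paper states this in one line; your version merely makes the cancellation explicit.
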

\begin{proof}
Using $\begin{displaystyle} \sum_{n=0}^{\infty} I_{1}(n) \frac{x^{n}}{n!} = \exp \left( x + x^{2}/2 \right)
\end{displaystyle}$, the claim follows from Lemma \ref{exp-gen1}.
\end{proof}

\begin{Cor}
\label{cor-cauchy1}
The sequence $\{ a_{n} \}$ satisfies 
\begin{equation}
\sum_{k=1}^{n} (-1)^{n-k} \binom{n}{k} a_{k-1} = 
\begin{cases}
(2m)!/2^{m} \, m! & \quad \text{ if } n = 2m+1, \\
0, & \quad \text{ if } n = 2m.
\end{cases}
\end{equation}
\end{Cor}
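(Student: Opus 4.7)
The plan is to recognize the left-hand side as a binomial transform and translate the identity into a statement about exponential generating functions, then evaluate using Corollary \ref{cor-expgf}.

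Define $b_n = \sum_{k=1}^{n} (-1)^{n-k}\binom{n}{k} a_{k-1}$. Set $c_0 = 0$ and $c_k = a_{k-1}$ for $k \geq 1$, so that $b_n = \sum_{k=0}^{n} (-1)^{n-k}\binom{n}{k} c_k$. The standard exponential-generating-function interpretation of the (signed) binomial transform then yields
\begin{equation*}
B(x) := \sum_{n \geq 0} b_n \frac{x^n}{n!} = e^{-x}\, C(x),
\qquad
C(x) := \sum_{k \geq 1} a_{k-1}\, \frac{x^k}{k!} = \int_0^x A(t)\, dt,
\end{equation*}
where $A(x) = \sum_{n \geq 0} a_n x^n/n!$ is the generating function from Corollary \ref{cor-expgf}.

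Next, I would compute $\int_0^x A(t)\, dt$ in closed form. Using
\begin{equation*}
A(t) = e^{t+t^2/2} + e^{t}\int_0^t e^{s^2/2}\, ds,
\end{equation*}
the key step is integration by parts on the second summand, taking $u = \int_0^t e^{s^2/2}\,ds$ and $dv = e^t\,dt$. This yields
\begin{equation*}
\int_0^x e^{t}\int_0^t e^{s^2/2}\,ds\,dt = e^x\int_0^x e^{s^2/2}\,ds - \int_0^x e^{t+t^2/2}\,dt,
\end{equation*}
so the $e^{t+t^2/2}$ contributions cancel and one obtains the clean formula $\int_0^x A(t)\,dt = e^{x}\int_0^x e^{s^2/2}\,ds$. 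Multiplying by $e^{-x}$ gives
\begin{equation*}
B(x) = \int_0^x e^{s^2/2}\,ds.
\end{equation*}

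Finally, expanding $e^{s^2/2} = \sum_{m \geq 0} s^{2m}/(2^m m!)$ and integrating term by term produces
\begin{equation*}
B(x) = \sum_{m \geq 0} \frac{x^{2m+1}}{(2m+1)\, 2^m\, m!}.
\end{equation*}
Reading off the coefficient of $x^n/n!$ shows $b_n = 0$ when $n = 2m$ and $b_{2m+1} = (2m+1)!/[(2m+1)\,2^m m!] = (2m)!/(2^m m!)$, which is the claim. The calculation is essentially routine; the only step requiring care is spotting the integration by parts that collapses $\int_0^x A(t)\,dt$ into a single exponential-times-error-integral, after which everything falls out by term-by-term coefficient extraction.
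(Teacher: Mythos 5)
Your proof is correct and follows essentially the same route as the paper: both identify the exponential generating function of the left-hand side as $e^{-x}\sum_{k\ge 1}a_{k-1}x^{k}/k! = \int_{0}^{x}e^{s^{2}/2}\,ds$ via Corollary \ref{cor-expgf} and then read off coefficients. The only cosmetic difference is that the paper obtains $\sum_{k\ge 1}a_{k-1}x^{k}/k! = g(x)-w(x)$ directly from the telescoping relation $a_{n}-I_{1}(n)=a_{n-1}$, whereas you reach the same series as $\int_{0}^{x}A(t)\,dt$ and collapse it by an integration by parts.
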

\begin{proof}
Using the notation of Lemma  \ref{exp-gen1},
\begin{equation*}
\int_{0}^{x} e^{t^{2}/2} \, dt = e^{-x} [ g(x) - w(x) ] 
= e^{-x} \sum_{n=0}^{\infty} [a_{n} - I_{1}(n)] \frac{x^{n}}{n!} =
e^{-x} \sum_{n=1}^{\infty} a_{n-1} \frac{x^{n}}{n!}.
\end{equation*}
\no
Now write $e^{-x}$ as a series and multiply out to arrive at the assertion.
\end{proof}

\begin{Cor}
\label{cor-cauchy2}
The following identity holds:
\begin{equation}
\sum_{j=1}^{m} \binom{2m}{2j} a_{2j-1} = \sum_{j=1}^{m} \binom{2m}{2j-1}a_{2j-2}.
\end{equation}
\end{Cor}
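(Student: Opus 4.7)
The plan is to deduce this identity as a direct specialization of the even case of Corollary \ref{cor-cauchy1}. The statement of that corollary, for $n = 2m$, asserts
\[
\sum_{k=1}^{2m} (-1)^{2m-k}\binom{2m}{k} a_{k-1} = 0.
\]
Since $(-1)^{2m-k} = (-1)^{k}$, the sign depends only on the parity of $k$, which is exactly the split that appears in the target identity.

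First I would separate the summation range $1 \le k \le 2m$ into the even indices $k=2j$ with $1 \le j \le m$ and the odd indices $k=2j-1$ with $1 \le j \le m$. The even terms carry $(-1)^{2j} = +1$ and produce $\binom{2m}{2j}a_{2j-1}$, while the odd terms carry $(-1)^{2j-1} = -1$ and produce $-\binom{2m}{2j-1}a_{2j-2}$. Transposing the negative block to the other side of the equality gives precisely
\[
\sum_{j=1}^{m}\binom{2m}{2j}a_{2j-1} \;=\; \sum_{j=1}^{m}\binom{2m}{2j-1}a_{2j-2},
\]
which is the claim.

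Essentially no obstacle is expected: the hard analytic work (converting the exponential generating function identity of Corollary \ref{cor-expgf} into a binomial convolution via multiplication by $e^{-x}$) was already done in Corollary \ref{cor-cauchy1}, and the present statement is its parity decomposition. The only thing to double-check is the arithmetic of the exponent $(-1)^{2m-k}$ and the endpoint $k=1$ (so that $a_0$ appears correctly as the $j=1$ term of the odd sum), both of which are routine. If a self-contained argument is preferred, one can alternatively evaluate $g(x)+g(-x)$ where $g(x)=\sum a_n x^n/n!$ and use Corollary \ref{cor-expgf} to observe that the even part of $e^{-x}g(x)$ has no contribution beyond the constant term, yielding the same identity; but invoking Corollary \ref{cor-cauchy1} directly is the shortest route.
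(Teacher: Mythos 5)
Your proposal is correct and is exactly the paper's argument: the paper's entire proof is ``This is Corollary \ref{cor-cauchy1} for $n=2m$,'' and you have simply spelled out the parity split and sign bookkeeping that this one-liner leaves implicit. Nothing to add.
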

\begin{proof}
This is Corollary \ref{cor-cauchy1} for $n=2m$.
\end{proof}

\section{Arithmetic properties of the sequence $a_{n}$}
\label{sec-arith-a}
\setcounter{equation}{0}

The next statement  is the corresponding counterpart to Theorem \ref{valuation-2}.

\begin{theorem}
\label{valuation-an}
The $2$-adic valuation of the sequence $a_{n}$ is given by 
\begin{equation}
\nu_{2}(a_{n}) = \begin{cases}
k, & \quad \text{ if } n = 4k-3, \\
k+1, & \quad \text{ if } n = 4k-2, \\
k, & \quad \text{ if } n = 4k, \\
\nu_{2}(k) + k + 2, & \quad \text{ if } n = 4k-1.
\end{cases}
\end{equation}
\end{theorem}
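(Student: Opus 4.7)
The plan is to prove the theorem by strong induction on $n$, treating the four residue classes modulo $4$ separately. The driving tool is the elementary identity $a_n = a_{n-1} + I_1(n)$ (since $a_n$ is a partial sum of involution numbers), which combines with Theorem \ref{valuation-2} to give the $2$-adic valuation of $I_1(n)$ in each residue class.

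The case $n = 4k$ is immediate: Theorem \ref{valuation-2} gives $\nu_2(I_1(4k)) = k$, while the inductive hypothesis applied to $n - 1 = 4k-1$ gives $\nu_2(a_{4k-1}) = \nu_2(k) + k + 2 \geq k + 2$. The strict inequality forces $\nu_2(a_{4k}) = k$, as claimed.

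For $n \equiv 1, 2, 3 \pmod 4$, a direct computation shows that $\nu_2(a_{n-1})$ and $\nu_2(I_1(n))$ coincide (equal to $k-1$, $k$, and $k+1$ respectively). Hence the sum $a_n = a_{n-1} + I_1(n)$ enjoys at least one cancellation and one must descend to finer information. The strategy is to strengthen the inductive hypothesis so that it records the residue of the odd part $a_n / 2^{\nu_2(a_n)}$ modulo a suitably high fixed power of $2$, together with the parallel information for $I_1(n) / 2^{\nu_2(I_1(n))}$ (which is extractable either from the explicit sum \eqref{finite-sum1} or from iterating \eqref{rec-invol}). The matching congruences then deliver the next-order cancellation in each of the three remaining cases.

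The main obstacle is the case $n = 4k - 1$, where the answer $\nu_2(a_{4k-1}) = \nu_2(k) + k + 2$ contains the arithmetically variable term $\nu_2(k)$. Since both $a_{4k-2}$ and $I_1(4k-1)$ have exact valuation $k+1$, producing the target valuation requires $\nu_2(k) + 1$ successive cancellations between their odd parts, so any fixed-power-of-$2$ strengthening of the induction will be insufficient by itself. To handle this I would iterate the third-order recurrence \eqref{recur-a} — in the same spirit as the iteration of \eqref{rec-invol} used in Cases 3 and 4 of Theorem \ref{valuation-2} — so as to express $a_{4k-1}$ directly in terms of $a_{4(k-1)-1}$ (and lower-order, strictly larger-valuation, terms). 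Once this telescoping relation is in place, the factor of $\nu_2(k)$ emerges naturally by induction on $k$, completing the case. An alternative line of attack is to use Corollary \ref{cor-cauchy1}, which expresses certain alternating sums of $a_{k-1}$ in terms of $(2m)!/(2^m m!)$; its $2$-adic content (which is $0$ by the corollary in Section \ref{sec-ione}) provides an independent handle on the odd parts of $a_n$.
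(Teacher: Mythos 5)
Your cases $n=4k$, $n=4k-3$, $n=4k-2$ are fine in outline: the first is immediate from $a_{4k}=a_{4k-1}+I_{1}(4k)$ since $\nu_{2}(a_{4k-1})\geq k+2>k=\nu_{2}(I_{1}(4k))$, and the other two need only one extra order of cancellation, so tracking the odd parts of $a_{n}$ and $I_{1}(n)$ modulo a fixed power of $2$ (which you have not actually carried out, but which is a finite bookkeeping task) would close them. The genuine gap is the case $n=4k-1$, and your proposed fix does not work. You correctly observe that $\nu_{2}(a_{4k-2})=\nu_{2}(I_{1}(4k-1))=k+1$ while the target is $\nu_{2}(k)+k+2$, so the required depth of cancellation is unbounded; but the telescoping relation you then invoke, $a_{4k-1}=c_{k}\,a_{4k-5}+(\text{terms of strictly larger valuation})$, cannot exist with integer coefficients. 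If it did, you would get $\nu_{2}(a_{4k-1})=\nu_{2}(c_{k})+\nu_{2}(a_{4k-5})$, i.e.\ $\nu_{2}(c_{k})=\nu_{2}(k)-\nu_{2}(k-1)+1$, and the right-hand side is negative whenever $k$ is odd and $k-1$ is divisible by $4$ (e.g.\ $k=2^{10}+1$). The quantity $\nu_{2}(k)-\nu_{2}(k-1)$ is unbounded in both directions, so no bounded-depth recurrence argument --- telescoped or not --- can produce the factor $\nu_{2}(k)$; the same obstruction you identified for the strengthened induction simply reappears one level up.

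The paper's proof of this case uses an entirely different mechanism, which is worth noting because it shows where $\nu_{2}(k)+2$ actually comes from: the explicit binomial formula gives the factorization
\begin{equation*}
a_{4k-1}=\sum_{j=0}^{2k-1}\frac{(2j)!}{j!\,2^{j}}\binom{4k}{2j+1}
=4k\sum_{j=0}^{2k-1}\frac{(2j)!}{j!\,2^{j}}\,\frac{1}{2j+1}\binom{4k-1}{2j}=4k\,b_{k},
\end{equation*}
so the variable part $\nu_{2}(4k)=\nu_{2}(k)+2$ is pulled out as a literal prefactor, and it remains to show $\nu_{2}(b_{k})=k$. That step is itself nontrivial: the paper introduces the sums $F(\alpha,\beta,k)=\sum_{j}(2j+\alpha)^{\beta}\frac{(2j)!}{j!2^{j}}\binom{4k-1}{2j}$, reduces them to involution numbers $I_{1}(4k-2r-1)$ whose valuations are known from Theorem \ref{valuation-2}, and then handles the factor $1/(2j+1)$ by Euler's theorem, replacing $(2j+1)^{-1}$ by $(2j+1)^{2^{m-1}-1}$ modulo $2^{m}$ and choosing $m=k$ and $m=k+1$. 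If you want to complete your argument, you need some analogue of that explicit factor of $4k$; no amount of iterating \eqref{recur-a} will supply it. Your passing mention of Corollary \ref{cor-cauchy1} is closer in spirit to a workable route (it also exploits the explicit binomial expression for $a_{n}$), but as written it is only a pointer, not an argument.
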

\begin{proof}
The inductive proof distinguishes the four values of $n$ modulo $4$. 

\smallskip

\noindent
\textbf{Case 1}: $n = 4k-3$. Then, the induction hypothesis shows that
\begin{equation}
a_{n-1} = 2^{k-1}O_{1}, \, a_{n-2} = 2^{k+1+\nu_{2}(k-1)}O_{2}, \text{ and }
a_{n-3} = 2^{k} O_{3}
\end{equation}
\noindent
with $O_{j}$ odd integers. Then the recurrence \eqref{recur-a} implies
\begin{equation}
a_{4k-3} = 2^{k} \left[ O_{1}+ (4k-5)2^{1+\nu_{2}(k-1)}O_{2} -(k-1)2^{2}O_{3} \right].
\end{equation}
\noindent
Thus $\nu_{2}(a_{4k-3}) = k$. 

\smallskip

\noindent
\textbf{Case 2}: $n = 4k$. Then 
\begin{equation}
a_{n-1} = 2^{\nu_{2}(k)+k+2}O_{1}, \, a_{n-2} = 2^{k+1}O_{2}, \text{ and }
a_{n-3} = 2^{k}O_{3}
\end{equation}
\noindent
and \eqref{recur-a} implies
\begin{equation}
a_{4k} = 2^{k} \left[ 2^{\nu_{2}(k)+3}O_{1} + 2^{2}(2k-1)O_{2} - (4k-1)O_{3} \right]
\end{equation}
\noindent
and $\nu_{2}(a_{4k}) = k$ follows form here. 

\smallskip

\noindent
\textbf{Case 3}: $n = 4k-2$. Then, as in the proof of Theorem \ref{valuation-2}, the 
recurrence needs to be iterated to produce 
\begin{equation}
a_{4k-2} = 4ka_{4k-4} + (4k-7)a_{4k-5} - 8(k-1)a_{4k-6}.
\end{equation}
\noindent
The induction hypothesis gives 
\begin{equation}
a_{4k-2} = 2^{k+1} \left[ kO_{1} + (4k-7)2^{\nu_{2}(k-1)}O_{2} - 4(k-1)O_{3} \right].
\end{equation}
\noindent
For $k$ odd, the first term in the square bracket is odd and the other two are even. For 
$k$ even, the second term is odd and the other two are even. In either case, 
$\nu_{2}(a_{4k-2}) = k+1$. 

\smallskip

\noindent
\textbf{Case 4}: $n = 4k-1$. The statement to be proved is
\begin{equation}
\nu_{2}(a_{n}) = \nu_{2}(4k) + k.
\end{equation}
\no
Observe that 
\begin{eqnarray}
a_{4k-1} & = & \sum_{j=0}^{2k-1} \frac{(2j)!}{j! 2^{j}} \binom{4k}{2j+1} \label{sum-1} \\ 
 & = & 4k \sum_{j=0}^{2k-1}  \frac{(2j)!}{j! 2^{j}}  \frac{1}{2j+1} \binom{4k-1}{2j}. \nn
 \end{eqnarray}
\no
Therefore, it suffices to show that $\nu_{2}(b_{k}) = k$ where 
\begin{equation}
b_{k} = \sum_{j=0}^{2k-1}  \frac{(2j)!}{j! 2^{j}}  \frac{1}{2j+1} \binom{4k-1}{2j}.
\label{bk-def}
\end{equation}
It should be  noted that not all summands in $b_{k}$ are integers. 

\smallskip

The proof of this last step is based on the valuations of the sum 
\begin{equation}
F(\alpha, \beta, k) = \sum_{j=0}^{2k-1} (2j+ \alpha)^{\beta} 
 \frac{(2j)!}{j! 2^{j}}   \binom{4k-1}{2j}. 
 \end{equation}
 \no
 Observe that 
 \begin{equation}
 F(\alpha,0,k) = \sum_{j=0}^{2k-1}  \frac{(2j)!}{j! 2^{j}}   \binom{4k-1}{2j} = I_{1}(4k-1).
 \end{equation}
 
 The next lemma relates  $F(\alpha,1,k)$ with  the involution numbers. 
 
 \begin{lemma}
 Let $\alpha, \, k \in \mathbb{N}$. Then 
 \begin{equation}
 F(\alpha,1,k) = \alpha I_{4k-1} + 2(4k-1)(2k-1)I_{4k-3}.
 \label{casea=1}
 \end{equation}
 \end{lemma}
 \begin{proof}
 Simply observe that 
 \begin{equation*}
 F(\alpha,1,k)  =  \alpha F(\alpha,0,k) +  
 2 \sum_{j=1}^{2k-1} j \cdot \frac{(2j)!}{j!2^{j}} \binom{4k-1}{2j}
 \end{equation*}
 \no
 and then check that the last sum is $2(4k-1)(2k-1)I_{4k-3}$.
 \end{proof}
 
The $2$-adic valuation of $F(\alpha,\beta,k)$ is computed next when $\alpha, \, \beta \in \mathbb{N}$ 
and $\alpha$ is odd. 

 \begin{theorem}
 \label{propo-val1}
 Let $\alpha, \, \beta \in \mathbb{N}$ with $\alpha$ odd. Then 
 \begin{equation}
 \label{val-alpha}
 \nu_{2}(F(\alpha, \beta,k)) = \begin{cases}
 k+1 & \quad \text{ if } \beta \text{ is even}, \\
  k & \quad \text{ if } \beta \text{ is odd}.
  \end{cases}
  \end{equation}
  \end{theorem}
  \begin{proof}
 The case $\beta=0$ is Theorem \ref{valuation-2}. The case $\beta=1$ is obtained 
 from the identity \eqref{casea=1} and Theorem \ref{valuation-2}. The rest of the proof is divided 
 according to the parity of $\beta$. 
 
 \smallskip
 
 \no
 \textbf{Case 1}: $\beta>1$ odd. Expand $(2j+\alpha)^{\beta}$ by the binomial theorem to 
 obtain 
 \begin{equation}
 F(\alpha, \beta, k) = \sum_{\ell=0}^{\beta} \binom{\beta}{\ell} \alpha^{\beta- \ell} 
 \sum_{j=0}^{2k-1} \binom{4k-1}{2j} \frac{(2j)!}{j!2^{j}} 2^{\ell} j^{\ell}.
 \end{equation}  
 \no
 The term corresponding to $\ell=0$ is 
 \begin{equation}
 t_{\ell=0} :=\alpha^{\beta} \sum_{j=0}^{2k-1} \binom{4k-1}{2j} \frac{(2j)!}{j!2^{j}}  = \alpha^{\beta} I_{1}(4k-1).
 \end{equation}
 \no
 Theorem \ref{valuation-2} gives its $2$-adic valuation as 
 \begin{equation}
 \nu_{2}(t_{\ell=0}) = \nu_{2}(I_{4k-1}) = k+1.
 \end{equation}
 \no
 The term for $\ell=1$ is 
 \begin{equation}
 t_{\ell=1} := \beta \alpha^{\beta-1} \sum_{j=0}^{2k-1} \binom{4k-1}{2j} \frac{(2j)!}{j!2^{j}} \cdot 2j = 
 2(4k-1)(2k-1)I_{1}(4k-3)
 \end{equation}
 \no
 and its $2$-adic valuation is 
 \begin{equation}
 \nu_{2}(t_{\ell=1}) = \nu_{2}(I_{4k-3}) = k.
 \end{equation}
 
 \smallskip 
 
 For  the remaining terms in the sum  $F(\alpha,\beta,k)$ use the identity 
 \begin{equation}
 j^{\ell} = \sum_{r=1}^{\ell} c_{r} \frac{j!}{(j-r)!} 
 \end{equation}
 \no
 where $c_{r} \in \mathbb{Z}$ (these are the Stirling numbers, but only their 
 integrality matters here). This leads to the expression 
 \begin{equation}
 \sum_{\ell=2}^{\beta} \binom{\beta}{\ell} \alpha^{\beta- \ell} 
 \sum_{r=1}^{\ell} c_{r} \sum_{j=0}^{2k-1} \binom{4k-1}{2j} 
 \frac{(2j)!}{(j-r)! 2^{j - \ell}}.
 \end{equation}
 \no
 The theorem now follows from the fact that the internal sum has 
 $2$-adic valuation at least $k+1$. This implies that $\ell=1$ controls the valuation. 
 In order to verify this statement, observe that 
 \begin{eqnarray*}
 \sum_{j=0}^{2k-1} \binom{4k-1}{2j} 
 \frac{(2j)!}{(j-r)! 2^{j - \ell}} & = & 2^{\ell-r} \sum_{j=0}^{2k-1} \binom{4k-1}{2j} 
 \frac{(2j)!}{(j-r)! 2^{r-j }}  \\
 & = & 2^{\ell-r} \frac{(4k-1)!}{(4k-2r-1)!} \sum_{m=0}^{2k-r} \binom{4k-2r-1}{2m} \frac{(2m)!}{m!2^{m}} \\
 & = &2^{\ell-r}  \frac{(4k-1)!}{(4k-2r-1)!} I_{4k-2r-1}.
 \end{eqnarray*}
 \no
 A direct application of Theorem \ref{valuation-2} shows that 
 \begin{equation*}
 \nu_{2} \left( \frac{2^{\ell-r}(4k-1)!}{(4k-2r-1)!} I_{4k-2r-1} \right) \geq \ell - r + \left(  r + \lf \frac{r}{2} \rf \right) + 
 \left( k + \lf \frac{r}{2} \rf -1 \right)  \geq \ell + k-1.
 \end{equation*}
 \no 
The statement about the valuation of the internal sums is now immediate since $\ell \geq 2$. 

\medskip

\no
\textbf{Case 2}: $\beta$  even. As in the case $\beta$ odd, the  valuations of the internal sums are bounded from below
by $\ell + k -1$. In particular, the lower bound is at least $k+2$ if $\ell \geq 3$. This leads to the decomposition
\begin{equation}
F(\alpha,\beta,k) = X_{1}(\alpha,\beta,k) + X_{2}(\alpha,\beta,k)+X_{3}(\alpha,\beta,k)
\end{equation}
\no
where
\begin{eqnarray}
X_{1}(\alpha, \beta,k) & = & \alpha^{\beta} \sum_{j=0}^{2k-1} \binom{4k-1}{2j} \frac{ (2j)!}{j!2^{j}}  \label{x1-def} \\
X_{2}(\alpha, \beta,k) & = & \sum_{\ell=1}^{2} \binom{\beta}{\ell} \alpha^{\beta - \ell} 
\sum_{j=0}^{2k-1} \binom{4k-1}{2j} \frac{ (2j)!}{j!2^{j}} (2j)^{\ell}  \nn \\
X_{3}(\alpha, \beta,k) & = & \sum_{\ell=3}^{\beta} \binom{\beta}{\ell} \alpha^{\beta - \ell} 
\sum_{j=0}^{2k-1} \binom{4k-1}{2j} \frac{ (2j)!}{j!2^{j}} (2j)^{\ell}.  \nn
\end{eqnarray}
\no
Then $\nu_{2}(X_{3}(\alpha,\beta,k)) \geq k+2$. It is now shown that $\nu_{2}(X_{1}(\alpha,\beta,k)) = k+1$ and  
$\nu_{2}(X_{2}(\alpha,\beta,k)) \geq  k+3$. This proves the formula for the valuation of $F(\alpha, \beta, k)$ when 
$\beta$ is even. 

\medskip

To prove the statement about the valuation of $X_{1}$ use the identity $X_{1}(\alpha,\beta,k) = \alpha^{\beta} I_{1}(4k-1)$  and Theorem \ref{valuation-2}. The proof of the corresponding formula for  $X_{2}$ starts with the expression
\begin{eqnarray}
X_{2}(\alpha,\beta,k) & = & \beta \alpha^{\beta-1} \sum_{j=0}^{2k-1} \binom{4k-1}{2j} \frac{ (2j)!}{j!2^{j}} (2j) \label{x2-one}  \\
& + & \binom{\beta}{2} \alpha^{\beta-2} \sum_{j=0}^{2k-1} \binom{4k-1}{2j} \frac{ (2j)!}{j!2^{j}} (4j^{2}) \nn 
\end{eqnarray}
\no
and then use $4j^{2} = 4j(j-1) + 4j$ and the identities
\begin{eqnarray*}
 \sum_{j=0}^{2k-1} \binom{4k-1}{2j} \frac{ (2j)!}{j!2^{j}} (2j) & = & (4k-1)(4k-2) I_{4k-3} \\
 \sum_{j=0}^{2k-1} \binom{4k-1}{2j} \frac{ (2j)!}{j!2^{j}} (4j^{2})  & = & (4k-1)(4k-2)(4k-3)(4k-4)I_{4k-5} \nn
 \end{eqnarray*}
\no
to arrive at 
\begin{eqnarray*}
X_{2}(\alpha,\beta,k)   & = &  2 \beta \alpha^{\beta-2} (\alpha + \beta-1)(2k-1)(4k-1)I_{4k-3}  \\
& + & 8 \binom{\beta}{2} \alpha^{\beta-2} (4k-1)(2k-1)(4k-3)(k-1)I_{4k-5}.
\end{eqnarray*}
\no
Then Theorem \ref{valuation-2} implies 
\begin{equation}
\nu_{2}(\beta) + \nu_{2}(\alpha+\beta-1) + 1 + \nu_{2}(I_{4k-3}) = 
\nu_{2}(\beta) + \nu_{2}(\alpha+\beta-1) + k \geq k+2,
\end{equation}
\no
and the valuation of the second term is 
\begin{equation}
\nu_{2}(\beta) -1 + 3  + \nu_{2}(I_{4k-5}) = \nu_{2}(\beta) + 2  + k \geq k+3.
\end{equation}
\no
The statement about  $\nu_{2}(X_{2})$ is established . The formula for $\nu_{2}(F(\alpha,\beta,k))$, when $\beta$ is even, follows from these results.
 \end{proof}

\medskip

The remainder of the  proof of Theorem \ref{valuation-an} has been reduced to verifying that
 $\nu_{2}(b_{k}) = k$, where $b_{k}=F(1,-1,k)$ is defined  in \eqref{bk-def}.
 
 \medskip

For $m \in \mathbb{N}$ and $a$ odd, Euler's theorem yields 
\begin{equation}
a^{-1} \equiv a^{\varphi(2^{m}) -1} = a^{2^{m-1}-1} \bmod 2^{m}.
\end{equation}
\no
Therefore 
\begin{eqnarray*}
F(1,-1,k)  & \equiv &  \sum_{j=0}^{2k-1} (2j+1)^{2^{m-1}-1} \frac{(2j)!}{j! 2^{j}} \binom{4k-1}{2j}  \\
& = & F(1,2^{m-1}-1,k) \bmod 2^{m}.
\end{eqnarray*}
\no
Since $2^{m-1}-1$ is odd, Proposition \ref{propo-val1} gives 
\begin{equation}
\nu_{2}(F(1,2^{m-1}-1,k)) = k.
\end{equation}
\no
Now choose $m=k$ to compute 
\begin{equation}
F(1,-1,k) \equiv F(1,2^{k-1}-1,k) \equiv 0 \bmod 2^{k}
\end{equation}
\no
and then choose $m=k+1$ to obtain
\begin{equation}
F(1,-1,k) \equiv F(1,2^{k}-1,k) \not \equiv 0 \bmod 2^{k+1}.
\end{equation}
\no
It follows that $\nu_{2}(F(1,-1,k)) = k$, as desired.  The proof 
of Theorem \ref{valuation-an} is now complete.
\end{proof}

\begin{note}
For $p$ odd, the $p$-adic valuation of $a_{n}$ also exhibits some interesting 
patterns which will be investigated  in the future.  For instance, when  $p=3$, it is noted that 
\begin{equation}
\nu_{3}(a_{n}) = 0 \text{  if }n \not \equiv 8 \bmod 9
\end{equation}
\no
and 
\begin{equation}
\nu_{3}(a_{9n+8}) = \begin{cases}
0 & \quad \text{if} \quad n \equiv 0 \bmod 3, \\
0 & \quad \text{if} \quad n \equiv 1 \bmod 3, \\
\nu_{3}(n+1) & \quad \text{if} \quad n \equiv 2 \bmod 3.
\end{cases}
\end{equation}
\no
Similar formulas may be tested out experimentally  for other primes.
\end{note}

\section{Permutations of restricted length}
\label{sec-permu}
\setcounter{equation}{0}

Let $n \in \mathbb{N}$ and $0 \leq \ell \leq n$.  This section considers the 
set 
\begin{equation*}
C_{n, \ell} = \{ \pi \in \mathfrak{S}_{n} \Big{|} \text{ every cycle in } \pi \text{ is of length at most } \ell \}
\end{equation*}
\no
and its  cardinality $d_{n,\ell} = \# C_{n,\ell}$.

\begin{proposition}
\label{gen-dn}
The numbers $d_{n,\ell}$ satisfy the recurrence 
\begin{equation*}
d_{n+1,\ell} = d_{n,\ell} + \frac{n!}{(n-1)!} d_{n-1,\ell} + \frac{n!}{(n-2)!}d_{n-2,\ell} +
\cdots + \frac{n!}{(n-\ell+1)!} d_{n+1 - \ell,  \ell}.
\end{equation*}
\no
Equivalently
\begin{equation}
\sum_{n=0}^{\infty} d_{n,\ell} \frac{x^{n}}{n!} = { \rm{exp}}\left( x + \frac{x^{2}}{2} + \frac{x^{3}}{3}+ 
\cdots + \frac{x^{\ell}}{\ell} \right).
\end{equation}
\end{proposition}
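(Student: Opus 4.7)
The argument splits naturally into two steps: first I would establish the recurrence by a classical cycle decomposition, and then upgrade it to a first-order linear ODE for the exponential generating function, whose solution is elementary.

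For the recurrence, the plan is to classify each $\pi \in C_{n+1,\ell}$ according to the length $k$ of the cycle containing the letter $n+1$. Since every cycle of $\pi$ has length at most $\ell$, we must have $1 \leq k \leq \ell$. To build such a $\pi$, I would select the $k-1$ companions of $n+1$ in $\binom{n}{k-1}$ ways, arrange them as a cyclic order around $n+1$ in $(k-1)!$ ways (fix $n+1$ and list the remaining $k-1$ elements of the cycle in linear order), and then extend by an arbitrary element of $C_{n+1-k,\ell}$ on the leftover letters. Summing over $k$ produces
\[
d_{n+1,\ell} \;=\; \sum_{k=1}^{\ell}\binom{n}{k-1}(k-1)!\, d_{n+1-k,\ell} \;=\; \sum_{k=1}^{\ell}\frac{n!}{(n-k+1)!}\, d_{n+1-k,\ell},
\]
which is the stated recurrence (terms with $n+1-k<0$ drop out automatically since $\binom{n}{k-1}$ vanishes).

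Next, let $F(x) := \sum_{n\geq 0} d_{n,\ell}\, x^n/n!$. Multiplying the recurrence by $x^n/n!$ and summing over $n \geq 0$, the left side becomes $F'(x)$. On the right, the substitution $m = n-k+1$ gives
\[
\sum_{k=1}^{\ell} x^{k-1} \sum_{m \geq 0} d_{m,\ell}\frac{x^m}{m!} \;=\; F(x)\bigl(1 + x + x^2 + \cdots + x^{\ell-1}\bigr).
\]
Hence $F$ satisfies the separable ODE $F'(x) = (1+x+\cdots+x^{\ell-1})\,F(x)$ with $F(0)=1$, whose solution is $F(x) = \exp\!\bigl(x + x^2/2 + \cdots + x^\ell/\ell\bigr)$.

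There is no genuine obstacle here; the only point demanding a little care is the reindexing step, which is clean precisely because the vanishing binomial masks any would-be boundary issues. As a sanity check, the derivation can be read off directly from the exponential formula for labelled combinatorial structures: a single $k$-cycle on $k$ labelled points has EGF $(k-1)!\cdot x^k/k! = x^k/k$, and the EGF of a disjoint union of permitted cycles is the exponential of $\sum_{k=1}^{\ell} x^k/k$, matching the formula above and providing an independent path to the same conclusion.
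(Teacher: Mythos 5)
Your proof is correct and follows essentially the same route as the paper: classifying permutations in $C_{n+1,\ell}$ by the length of the cycle containing a distinguished letter (you use $n+1$, the paper uses $1$ — immaterial) to get the recurrence, then converting to the ODE $F'=(1+x+\cdots+x^{\ell-1})F$ for the exponential generating function. You simply spell out the counting and the ODE step in more detail than the paper does.
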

\begin{proof}
For $1 \leq j \leq \ell$, the number $1$ is in exactly $n!/(n-j+1)!$ cycles of length $j$. Now count the 
elements in $C_{n+1,\ell}$ according to the length of the cycle containing the number $1$. 
\end{proof}

A multivariate generalization of Proposition \ref{gen-dn} is given in terms of Toeplitz matrices. 

\begin{definition}
For $n, \, \ell \in \mathbb{N}$ and indeterminates $Y_{1}, \, Y_{2}, \cdots, Y_{\ell}$, the 
square matrix $M_{n,\ell}(\mathbf{Y})$, of size $n+1$,  has entries
\begin{equation}
M_{n, \ell}(k,j) = 
\begin{cases}
\imath^{j-k} Y_{j-k+1} & \quad \text{ if } 0 \leq j -k \leq \ell - 1, \\
 \imath j& \quad \text{ if } k = j+1, \\
0, & \quad \text{ otherwise}.
\end{cases}
\end{equation}
\end{definition}

\begin{example}
Let $n=5$ and $\ell=4$, then
\begin{equation}
M_{5,4}(\mathbf{Y}) = 
\begin{pmatrix}
Y_{1} & \imath Y_{2} & \imath^{2}Y_{3} & \imath^{3} Y_{4} & 0 \\
\imath & Y_{1} & \imath Y_{2} & \imath^{2} Y_{3} & \imath^{3} Y_{4} \\
0 & 2 \imath & Y_{1} & \imath Y_{2} & \imath^{2} Y_{3} \\
0 & 0 & 3 \imath & Y_{1} & \imath Y_{2} \\
0 & 0 & 0 & 4 \imath & Y_{1} 
\end{pmatrix}.
\end{equation}
\end{example}

\medskip

A generalization of Proposition \ref{gen-dn} is stated next. 

\begin{Thm}
The exponential generating function for the determinants of 
$M_{n,\ell}(\mathbf{Y})$ is 
\begin{equation}
\sum_{n=0}^{\infty} \det( M_{n, \ell}(\mathbf{Y}))  \frac{x^{n}}{n!} = 
\text{exp} \left( Y_{1} x + Y_{2} \frac{x^{2}}{2} + \cdots + 
Y_{\ell} \frac{x^{\ell}}{\ell} \right).
\label{gf-identity}
\end{equation}
\end{Thm}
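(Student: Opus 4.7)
The plan is to verify that $D_{n} := \det M_{n, \ell}(\mathbf{Y})$ and the coefficient $c_{n}$ of $x^{n}/n!$ on the right-hand side satisfy the same first-order recurrence with the same initial value. Since $F(x) := \exp\bigl(Y_{1}x + \tfrac{Y_{2}}{2}x^{2} + \cdots + \tfrac{Y_{\ell}}{\ell}x^{\ell}\bigr)$ obeys the ODE $F'(x) = (Y_{1} + Y_{2}x + \cdots + Y_{\ell}x^{\ell-1})F(x)$ with $F(0) = 1$, comparing coefficients of $x^{n-1}/(n-1)!$ yields
\begin{equation*}
c_{n} = \sum_{j=1}^{\min(\ell,n)} \frac{(n-1)!}{(n-j)!}\, Y_{j}\, c_{n-j}, \qquad c_{0} = 1.
\end{equation*}

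To derive the same recurrence for $D_{n}$, I would expand $\det M_{n,\ell}$ along the last column, whose nonzero entries $\imath^{n-k}Y_{n-k+1}$ occur in rows $k = \max(1, n+1-\ell), \ldots, n$. The central structural fact is that the cofactor matrix obtained by deleting row $k$ and column $n$ is block upper triangular: for any row $m \ge k+1$ of $M_{n,\ell}$, the leftmost nonzero entry lies in column $m-1 \ge k$ (coming from the subdiagonal, with band entries all further to the right), so the bottom-left $(n-k)\times(k-1)$ block vanishes. The top-left block is exactly $M_{k-1,\ell}(\mathbf{Y})$ with determinant $D_{k-1}$, while the bottom-right block is itself upper triangular with diagonal entries $\imath k, \imath(k+1), \ldots, \imath(n-1)$ read off the subdiagonal of the original matrix, contributing determinant $\imath^{n-k}(n-1)!/(k-1)!$.

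Inserting these cofactor determinants into Laplace's expansion, the imaginary powers pair as $\imath^{n-k}\cdot\imath^{n-k}=(-1)^{n-k}$ and exactly cancel the sign $(-1)^{k+n}$ produced by the expansion rule. Reindexing via $j = n-k+1$ then yields
\begin{equation*}
D_{n} = \sum_{j=1}^{\min(\ell,n)} Y_{j}\,\frac{(n-1)!}{(n-j)!}\,D_{n-j},
\end{equation*}
which matches the recurrence for $c_{n}$; together with $D_{0} = 1$ (empty determinant), this forces $D_{n} = c_{n}$ for all $n$, establishing \eqref{gf-identity}.

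The main technical step is the block upper triangular decomposition of each cofactor matrix and the correct identification of the diagonal of the lower-right block; once these are in hand, the cancellation of the imaginary units is automatic and the two recurrences align without further manipulation.
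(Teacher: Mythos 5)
Your proof is correct and follows essentially the same route as the paper: a Laplace expansion of $\det M_{n,\ell}(\mathbf{Y})$ (the paper says ``last row,'' which for the matrix as displayed should read last column --- precisely the expansion you carry out) yielding the recurrence $D_{n}=\sum_{j}Y_{j}\,\tfrac{(n-1)!}{(n-j)!}\,D_{n-j}$, which is then matched against the first-order ODE satisfied by the exponential on the right-hand side. You merely supply more detail than the paper on the block-triangular structure of the cofactors and the cancellation of the powers of $\imath$ against the expansion signs.
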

\begin{proof}
Fix $\ell$ and let $g_{n,\ell}(\mathbf{Y})= \text{det} (M_{n, \ell}(\mathbf{Y}))$. Use 
Laplace expansion of $g_{n,\ell}$ along the last row to obtain
\begin{multline}
g_{n,\ell}(\mathbf{Y})  = g_{n-1,\ell}(\mathbf{Y})Y_{1} + (n-1)g_{n-2,\ell}(\mathbf{Y})Y_{2} + (n-1)(n-2)g_{n-3,\ell}(\mathbf{Y})Y_{3} + \cdots + \\
+ (n-1)(n-2) \cdots (n-\ell+1)g_{n- \ell,\ell}(\mathbf{Y})Y_{\ell}.  \nonumber
\end{multline}
\no
Now set $\begin{displaystyle} F_{\ell}(x;\mathbf{Y}) = \sum_{n=0}^{\infty} g_{n,\ell} (\mathbf{Y})\frac{x^{n}}{n!}
\end{displaystyle}$. The recurrence shows that  $F_{\ell}(x;\mathbf{Y})$ matches the
 right-hand side of \eqref{gf-identity}.
\end{proof}

\smallskip

Comparing coefficients in the expansion \eqref{gf-identity} gives a statistic on the 
set $C_{n, \ell}$. 

\medskip

\begin{Thm}
Let $n \in \mathbb{N}$ and $0 \leq \ell \leq n$. Recall
$\alpha_{t}(\pi) = $ number of  $t$-cycles  in  $ \pi \in \mathfrak{S}_{n}$. Then 
\begin{equation}
\text{det }M_{n, \ell}(Y_{1}, \cdots, Y_{\ell}) = 
\sum_{\pi \in C_{n, \ell}} Y_{1}^{\alpha_{1}(\pi)} \cdots Y_{2}^{\alpha_{\ell}(\pi)}.
\end{equation}
\end{Thm}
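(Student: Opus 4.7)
The plan is to compare exponential generating functions on the two sides. The preceding theorem already supplies
\begin{equation*}
\sum_{n=0}^{\infty} \det(M_{n,\ell}(\mathbf{Y})) \, \frac{x^{n}}{n!} \;=\; \exp\!\left( \sum_{t=1}^{\ell} Y_{t}\, \frac{x^{t}}{t} \right),
\end{equation*}
so it suffices to prove that the right-hand side of the claimed identity has this same EGF. Matching coefficients of $x^{n}/n!$ then yields the theorem.

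To identify the EGF of the cycle-indexed sum, I would invoke the exponential formula. A single labeled $t$-cycle on a $t$-element set can be chosen in $(t-1)!$ ways, so marking each $t$-cycle by $Y_{t}$ produces the ``connected'' EGF
\begin{equation*}
C(x;\mathbf{Y}) \;=\; \sum_{t=1}^{\ell} (t-1)!\, Y_{t}\, \frac{x^{t}}{t!} \;=\; \sum_{t=1}^{\ell} Y_{t}\, \frac{x^{t}}{t}.
\end{equation*}
Since every $\pi \in C_{n,\ell}$ decomposes uniquely as a disjoint union of cycles of lengths in $\{1,\ldots,\ell\}$, the exponential formula gives
\begin{equation*}
\exp\!\bigl(C(x;\mathbf{Y})\bigr) \;=\; \sum_{n=0}^{\infty} \frac{x^{n}}{n!} \sum_{\pi \in C_{n,\ell}} Y_{1}^{\alpha_{1}(\pi)} \cdots Y_{\ell}^{\alpha_{\ell}(\pi)},
\end{equation*}
which is exactly the generating function already obtained for $\det(M_{n,\ell}(\mathbf{Y}))$.

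For a self-contained alternative, I would establish the recurrence
\begin{equation*}
G_{n+1}(\mathbf{Y}) \;=\; \sum_{j=1}^{\ell} \frac{n!}{(n+1-j)!}\, Y_{j}\, G_{n+1-j}(\mathbf{Y}),
\end{equation*}
where $G_{n}(\mathbf{Y}) := \sum_{\pi \in C_{n,\ell}} Y_{1}^{\alpha_{1}(\pi)} \cdots Y_{\ell}^{\alpha_{\ell}(\pi)}$, by conditioning on the length $j$ of the cycle containing the symbol $1$: choose the remaining $j-1$ elements of that cycle in $\binom{n}{j-1}$ ways, arrange them cyclically in $(j-1)!$ ways (contributing $Y_{j}$), and complete with an arbitrary element of $C_{n+1-j,\ell}$. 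This is precisely the recurrence that the Laplace expansion of $\det M_{n+1,\ell}(\mathbf{Y})$ along the last row produces in the proof of the preceding theorem, so matching initial values at $n=0$ finishes the argument.

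The only delicate point is the presence of the factors $\imath$ in the definition of $M_{n,\ell}(\mathbf{Y})$. In any nonzero term of the determinantal expansion, each superdiagonal entry $\imath^{j-k}Y_{j-k+1}$ used must be balanced by subdiagonal entries $\imath\cdot r$ returning to the diagonal, and a bookkeeping check (already tacit in the previous theorem) shows that the total power of $\imath$ together with the sign of the underlying permutation cancels to $+1$ on each monomial. Thus the coefficients of $Y_{1}^{a_{1}} \cdots Y_{\ell}^{a_{\ell}}$ on both sides are nonnegative integers counting the same objects, and the theorem follows.
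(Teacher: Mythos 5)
Your proposal is correct and follows the same route the paper intends: it takes the exponential generating function for $\det(M_{n,\ell}(\mathbf{Y}))$ from the preceding theorem, identifies $\exp\bigl(\sum_{t=1}^{\ell} Y_{t}x^{t}/t\bigr)$ as the cycle-index EGF of permutations with cycle lengths at most $\ell$ via the exponential formula, and compares coefficients — which is precisely the ``comparing coefficients in the expansion'' step the paper invokes. Your alternative recurrence argument and the remark about the powers of $\imath$ cancelling the permutation signs are sound supplementary checks but not a different method.
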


\begin{example}
Let  $n=5$ and $\ell=4$. Then the cycle-index polynomial is computed as
\begin{eqnarray*}
\det M_{5,4}(\mathbf{Y}) & = & 
Y_{1}^{5} + 10Y_{1}^{3}Y_{2} + 20 Y_{1}^{2}Y_{3} + 15Y_{1}Y_{2}^{2} +
30 Y_{1}Y_{4} + 20Y_{2}Y_{3} \\
& = & \sum_{\pi \in C_{5,4}} Y_{1}^{\alpha_{1}(\pi)} Y_{2}^{\alpha_{2}(\pi)} 
Y_{3}^{\alpha_{3}(\pi)} Y_{4}^{\alpha_{4}(\pi)} 
\end{eqnarray*}
\no
and it encodes the statistic on the set $C_{5,4}$. For instance,
$20$ permutations in $\mathfrak{S}_{5}$ are a product of a $2$-cycle and a $3$-cycle. 
Also, there are $\# C_{5,4} = \det M_{5,4}(\mathbf{1}) = 96$ permutations 
formed by cycles of length $4$ or less. This is $5!=120$ minus the $24$ 
cycles of length $5$.
\end{example}

\smallskip

\begin{note}
The special case $Y_{j}=1$ (for all $j$) produces 
\begin{equation}
d_{n,\ell} = \text{det}(M_{n,\ell})(\mathbf{1}).
\end{equation}.
\end{note}

\section{Asymptotics}
\label{sec-asymptotics}
\setcounter{equation}{0}

This section considers the asymptotic behavior, as $n \to \infty$ with $\ell$ fixed, of the 
numbers $d_{n, \ell}  =  \# C_{n,\ell}$, counting the number of permutations 
in $\mathfrak{S}_{n}$ with every cycle of length at most $\ell$. Their 
exponential  generating function is 
\begin{equation}
f_{\ell}(z) = \sum_{n=0}^{\infty} d_{n,\ell} \frac{z^{n}}{n!} = \exp\left(z + \frac{z^{2}}{2} + \cdots + \frac{z^{\ell}}{\ell} \right).
\end{equation}
\no
Several authors provide asymptotic expansions for $f_{\ell}(z)$ (see Moser-Wyman \cite{moser-1955a} and 
Knuth \cite{knuth-1973a} when $\ell=2$; Wimp-Zeilberger \cite{wimp-1985a} for any $\ell$ using methods from
Birkhoff-Trjitzinsky). In this section, the general case is revisited using the \textit{saddle-point} technique in 
order to generate a first-order estimate.

Cauchy's integral formula gives 
\begin{equation}
d_{n,\ell} = \frac{n!}{2 \pi \imath} \oint_{C} \frac{f_{\ell}(z)}{z^{n+1}} dz
\end{equation}
\no
where $C$ is a simple closed curve around the origin. In the analysis presented here $C$ is a circle 
of radius $r$. Therefore 
\begin{eqnarray}
d_{n,\ell} & = & \frac{n!}{2 \pi \imath} \int_{|z|=r} f_{\ell}(z) \exp(-n \log z) \frac{dz}{z} \\
& = & \frac{n!}{2 \pi \imath} \int_{|z|=r} \text{exp}\left( z + \frac{z^{2}}{2} + \cdots + \frac{z^{\ell}}{\ell} -n \log z \right)\frac{dz}{z}
\nonumber
\end{eqnarray}
\no
The saddle-point method \cite{millerp-2006a} gives 
\begin{equation}
d_{n,\ell} \sim \frac{n!}{\sqrt{2 \pi \ell n}} \exp\left(r + r^{2}/2 + \cdots + r^{\ell}/\ell - n \log r \right)
\label{saddle-3}
\end{equation}
\no
where the saddle point $r_{+} \in \mathbb{R}^{+}$ is defined by the equation 
\begin{equation}
\frac{d}{dr} \left( r + r^2/2 + \cdots + r^{\ell}/\ell- n \log r \right) = 1 + r + r^{2} + \cdots + r^{\ell-1} - \frac{n}{r} = 0.
\label{saddle-1}
\end{equation}
\no
This is equivalent to $r + r^{2} + \cdots + r^{\ell} = n$. To 
 obtain information about the saddle point $r_{+}$, it is convenient to rewrite \eqref{saddle-1} in the form
\begin{equation}
r \left( \frac{1 - r^{-\ell}}{1 - r^{-1}} \right)^{1/\ell} - \eta = 0, \, \quad \text{ for } \eta \in \mathbb{C}.
\label{saddle-2}
\end{equation}
\no
This defines $r = r(\eta)$.  For $\eta = n^{1/\ell}$, this becomes $r(n^{1/\ell}) = r_{+}$. 

\smallskip

The limiting value $r/\eta \to 1$, as $r \to \infty$, suggests the asymptotic expansion 
\begin{equation}
r = r(\eta) = \eta + \alpha_{0} + \frac{\alpha_{-1}}{\eta} + \frac{\alpha_{-2}}{\eta^{2}} + \cdots, \text{ as } \eta \to \infty.
\label{saddle-4}
\end{equation}
\no
The exponent in \eqref{saddle-3} is now written as $\Phi(\eta) - \eta^{\ell} \log \eta$, with 
\begin{equation}
\Phi(\eta) = r(\eta)+ \frac{1}{2}r(\eta)^{2} + \cdots + \frac{1}{\ell} r(\eta)^{\ell} - \eta^{\ell} \log \frac{r(\eta)}{\eta}.
\end{equation}
\no
The expansion \eqref{saddle-4} leads to
\begin{equation}
\Phi(\eta) = \beta_{\ell} \eta^{\ell} + \cdots + \beta_{1} \eta + \beta_{0} + \frac{\beta_{-1}}{\eta} + \cdots
\end{equation}
\no
and the relevant contributions to the behavior of \eqref{saddle-3} come from the positive powers in this expansion. To compute these 
contributions observe that, for $k >0$,
\begin{equation}
\beta_{k} = \frac{1}{2 \pi \imath}  \int_{C} \frac{\Phi(\eta)}{\eta^{k+1}} d \eta = \frac{1}{2 \pi \imath } \int_{C} \frac{\Phi'(\eta)}{k \eta^{k}} d \eta,
\end{equation}
\no
where the second expression is obtained by using Cauchy's integral formula on the expansion of $\Phi'(\eta)$.  The contour $C$ is made to pass through the point $r_{+}$. Therefore, using $\Phi'(\eta) = \eta^{\ell-1} - \ell \eta^{\ell - 1} \log \frac{r}{\eta}$ 
it follows that, for $0 < k < \ell$, 
\begin{eqnarray}
\beta_{k}  & = & \frac{1}{2 \pi \imath } \int_{C} \frac{1}{k \eta^{k - \ell + 1}} d \eta - \frac{1}{2 \pi i } 
\int_{C} \frac{\ell \log \frac{r}{\eta}}{k \eta^{k - \ell + 1} }d \eta \\
& = & - \frac{1}{2 \pi \imath  } 
\int_{C} \frac{\ell \log \frac{r}{\eta}}{k \eta^{k - \ell + 1}}d \eta \nonumber
\end{eqnarray}
\no
since the first integral vanishes. This is now written as 
\begin{eqnarray}
\beta_{k} & = &  - \frac{1}{2 \pi \imath} \frac{\ell}{k(\ell-k)}  \int_{C} \log \frac{r}{\eta} \frac{d}{d \eta} \eta^{\ell - k} \, d \eta  \label{int-91a} \\
& = & \frac{1}{2 \pi \imath }  \frac{\ell}{k(\ell-k)} \int_{C} \left( \frac{r'}{r} - \frac{1}{\eta} \right) \eta^{\ell - k} d \eta \nonumber \\
& = & \frac{1}{2 \pi \imath}  \frac{\ell}{k(\ell-k)}  \int_{C} \frac{r'}{r} \eta^{\ell - k} d \eta \nonumber
\end{eqnarray}
\no
since the integral of $\eta^{\ell - k - 1}$ vanishes. Now make the change of variables $r \mapsto r(\eta)$ to obtain 
\begin{equation}
\beta_{k}  =  \frac{1}{2 \pi \imath } \int_{C_{1}} \frac{\ell}{k(\ell-k)} \left( \frac{1 - r^{- \ell} }{1 - r^{-1}} \right)^{\tfrac{\ell -k}{k}} 
r^{\ell - k - 1} dr.
\end{equation}
A residue calculation then gives 
\begin{equation}
\beta_{k} = \frac{1}{k(\ell - k)!} \left( \frac{\ell-k}{\ell} + 1 \right) \cdots \left( \frac{\ell - k}{\ell} + \ell - 1 \right).
\end{equation}

\smallskip

An easier calculation, left to the reader, gives 
$\begin{displaystyle}\beta_{0} = - \frac{1}{\ell} \sum_{j=2}^{\ell} \frac{1}{j} \end{displaystyle}$ and 
$\begin{displaystyle}\beta_{\ell} = \frac{1}{\ell} \end{displaystyle}$.  Combining the above and invoking Stirling's formula
 $n! \sim \sqrt{2 \pi n } n^{n} e^{-n}$ produces  the following result. 

\begin{theorem}
Let $\ell \in \mathbb{N}$ be fixed. Then
\begin{equation*}
d_{n, \ell} = \frac{1}{\sqrt{\ell}} n^{n(1 - 1/\ell)}  
\exp \left( - \frac{1}{\ell} \sum_{j=2}^{\ell} \frac{1}{j} + \frac{n}{\ell}  + \sum_{k=1}^{\ell - 1} 
\frac{ \left( \tfrac{k}{\ell} + 1 \right) \cdots \left( \tfrac{k}{\ell} + \ell - 1 \right)}{k! (\ell -k )} n^{\tfrac{\ell - k}{\ell}} \right).
\end{equation*}
\end{theorem}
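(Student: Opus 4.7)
The plan is to execute the saddle-point method on the Cauchy integral representation
\[
d_{n,\ell} = \frac{n!}{2\pi \imath}\oint \exp\bigl(\Psi(z) - n\log z\bigr)\frac{dz}{z}, \qquad \Psi(z) = z + \frac{z^2}{2} + \cdots + \frac{z^\ell}{\ell},
\]
on the circle of radius $r_+$ given implicitly by $r_+ + r_+^2 + \cdots + r_+^\ell = n$. First I would check that the standard Laplace analysis applies on this circle: the dominant contribution is concentrated in an arc of angular width $\Theta(1/\sqrt{\ell n})$ around $z = r_+$, and the second derivative of the exponent at $r_+$ satisfies $r_+^2\bigl(\Psi''(r_+) + n/r_+^2\bigr) \sim \ell n$. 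This gives the first-order estimate
\[
d_{n,\ell} \sim \frac{n!}{\sqrt{2\pi \ell n}}\,\exp\bigl(\Psi(r_+) - n\log r_+\bigr).
\]

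Next, I would set $\eta = n^{1/\ell}$ and invert the implicit relation \eqref{saddle-2} to obtain an asymptotic series $r(\eta) = \eta + \alpha_0 + \alpha_{-1}/\eta + \cdots$. Substituting this into $\Phi(\eta) := \Psi(r(\eta)) - \eta^\ell \log(r(\eta)/\eta)$ produces a Laurent expansion $\Phi(\eta) = \sum_k \beta_k \eta^k$, so the exponent in the saddle-point estimate becomes $\Phi(\eta) - (n/\ell)\log n$. The leading coefficient $\beta_\ell = 1/\ell$ is read off from $r^\ell/\ell \sim \eta^\ell/\ell$, and a direct substitution from the inversion series yields $\beta_0 = -\tfrac{1}{\ell}\sum_{j=2}^\ell 1/j$.

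The heart of the argument is computing the intermediate $\beta_k$ for $1 \leq k \leq \ell-1$. I would extract them as contour integrals $\beta_k = \frac{1}{2\pi\imath}\oint \Phi(\eta)\,\eta^{-k-1}d\eta$, integrate by parts to bring in $\Phi'(\eta) = \eta^{\ell-1}\bigl(1 - \ell\log(r/\eta)\bigr)$, integrate by parts once more to discharge the logarithm, and finally change variables from $\eta$ to $r$ using \eqref{saddle-2}. The integrand reduces to a rational expression essentially of the form $\bigl((1-r^{-\ell})/(1-r^{-1})\bigr)^{(\ell-k)/k}\,r^{\ell-k-1}$, and a residue calculation — obtained by expanding the bracket as a generalized binomial series — yields the Pochhammer-style closed form
\[
\beta_k = \frac{1}{k\,(\ell-k)!}\Bigl(\tfrac{\ell-k}{\ell}+1\Bigr)\cdots\Bigl(\tfrac{\ell-k}{\ell}+\ell-1\Bigr),
\]
which after reindexing $k \mapsto \ell - k$ matches the coefficients in the theorem statement.

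Finally, I would combine the pieces with Stirling's formula $n! \sim \sqrt{2\pi n}\,(n/e)^n$: the prefactor collapses to $1/\sqrt{\ell}$, the $n^n$ absorbs $-(n/\ell)\log n$ to produce $n^{n(1-1/\ell)}$, and the remaining $\beta_k\eta^k$ terms collect into the stated exponential. The main obstacle is the residue computation for $\beta_k$: the bookkeeping of boundary contributions in the two integrations by parts, the justification that the $\eta$-contour transforms into essentially a large $r$-circle (since $r(\eta) \sim \eta$), and the recognition of the binomial-series coefficient as the stated Pochhammer product all require care. By contrast, the Laplace analysis around $r_+$ and the final Stirling bookkeeping are routine.
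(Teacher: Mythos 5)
Your proposal follows the paper's own proof essentially step for step: the same Cauchy-integral saddle-point estimate at $r_+$ with $r_+ + \cdots + r_+^{\ell} = n$, the same substitution $\eta = n^{1/\ell}$ and function $\Phi(\eta)$, the same extraction of the coefficients $\beta_k$ via contour integrals, two integrations by parts, the change of variables $\eta \mapsto r$, and a residue computation yielding the Pochhammer product, finished off with Stirling's formula. The plan is correct and the reindexing $k \mapsto \ell-k$ you note is exactly how the paper's $\beta_k$ match the stated exponent.
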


\bigskip

\no
\textbf{Acknowledgments}. The second  author acknowledges the
partial support of NSF-DMS 1112656.

\end{document}